\documentclass[10pt,twocolumn,a4paper]{article}
\usepackage[T1]{fontenc}
\usepackage[utf8]{inputenc}
\usepackage{lmodern,amsmath,amssymb,amsthm,mathtools,bm}
\usepackage{graphicx,microtype,booktabs,array}
\usepackage{epstopdf}
\graphicspath{{./}{SGMS_Theory_Draft6/}}
\usepackage[hidelinks]{hyperref}
\usepackage{geometry}
\allowdisplaybreaks
\newtheorem{theorem}{Theorem}

\newtheorem{proposition}[theorem]{Proposition}
\newtheorem{corollary}[theorem]{Corollary}
\theoremstyle{definition}\newtheorem{definition}[theorem]{Definition}
\newtheorem{example}[theorem]{Example}
\theoremstyle{remark}\newtheorem{remark}[theorem]{Remark}
\newcommand{\R}{\mathbb R}
\newcommand{\norm}[1]{\lVert #1\rVert}
\newcommand{\dd}{\,\mathrm d}
\newcommand{\G}{\mathcal G}
\newcommand{\W}{\mathcal W}

\newcommand{\K}{\mathcal K}

\newcommand{\op}{\mathrm{op}}

\title{\bfseries Geometric Stability of Dynamical Systems\\[2pt] with Self-Generated State-Space Memory}
\author{Robert Vrabel\\[-1pt]\small Faculty of Materials Science and Technology in Trnava\\[-2pt]\small Slovak University of Technology in Bratislava, Slovakia\\ {\small\tt robert.vrabel@stuba.sk}}
\date{}
\begin{document}
	\twocolumn[
	\maketitle
	
	\begin{abstract}
		We develop a geometric stability framework for dynamical systems coupled to memory fields generated by their own state trajectories. Temporal age and state-space displacement enter a nonlinear hereditary force admitting an exact, trajectory-dependent velocity representation. A common velocity-history phase space contains both resting and uniformly translating solutions. We establish global velocity arrest under a uniform geometric memory-gain bound and obtain exact translating-speed balance laws from the full radial imprint. The response slope yields a positive longitudinal exponent on rising-speed branches, while a complementary contraction criterion gives local exponential attraction to selected descending branches in the scalar velocity dynamics, in fading-memory topology. Our principal result is a constructive geometric non-identifiability theorem: two systems with identical nonlinear force operators on an open neighborhood of rest can exhibit sharply different complete-motion behavior at the same subcritical gain. In the first, every admissible trajectory with bounded velocity prehistory comes to rest exponentially; in the second, locally attracting rest coexists with a locally attracting translating branch and an additional spectrally unstable translating branch. The infinite-memory case exhibits exponentially flat force differences, with all finite-order equilibrium derivatives agreeing despite the possibility of distinct global translating families. The framework also yields classical Volterra kernels and distributed-derivative transfer functions by linearization.
	\end{abstract}
	
	\vspace{2mm}
	
	\noindent\textbf{Keywords:}
	geometric memory; self-generated hereditary dynamics;
	nonlinear stability; infinite delay; translating solutions;
	spectral instability; fading-memory attraction;
	local-germ equivalence.
	
	\par\vspace{4mm}
	
	\noindent\textbf{MSC 2020:}
	34K20 (Primary);
	37C75, 34K25, 45J05 (Secondary).
	
	\par\vspace{10mm}
	]
	
	\section{Introduction}
An ordinary time-memory law assigns a weight to a past state according to how long ago it occurred. A self-generated spatial memory adds a second criterion: \emph{where} that state was visited relative to the current position. A recent state far from the particle and an older state close to it can therefore contribute differently to the present force. The trajectory deposits a spatial record, then interacts with that very record as it moves. In mathematical terms, the temporal age $s$ and the spatial displacement $x(t)-x(t-s)$ enter the same nonlinear hereditary functional.

This coupling creates a distinction between two questions that are often conflated. Local analysis near a stationary state tests the response to small perturbations and depends on a limited collection of derivatives and temporal moments. Global analysis asks whether the same system can sustain a persistent motion, return to a previously visited region, or converge to rest from a distant history. The latter questions involve the full geometry of the imprints along nonzero displacements. Our central problem is to determine how much of this global behavior remains undetermined even when the \emph{complete local nonlinear memory force} is known exactly.

The starting point is the nonlinear field--trajectory coupling studied in memory-driven particle models such as \cite{Sarkar2025}, alongside earlier pilot-wave trajectory equations \cite{Oza2013} and trail-following active-matter models \cite{Lutier2026}. The present analysis develops a common force functional and identifies the geometric quantities controlling velocity arrest, uniform translations, branch orientation, and moving-orbit stability. The central dynamical construction distinguishes two systems whose forces agree throughout an open neighborhood of rest while their global arrest and moving-branch behaviors differ.

The contribution is organized around three levels of information. The \emph{equilibrium tangent dynamics} describes what infinitesimal perturbations detect. The \emph{exact traveling response} decides which constant nonzero velocities satisfy the full nonlinear equation. Finally, the \emph{moving-branch variation and nonlinear estimates} distinguish growing and locally attracting speed histories. These levels use successively more of the spatial imprint; none of the latter two can generally be reconstructed from equilibrium spectral data alone.

The principal structural theorem makes that gap explicit. We construct two bounded, smooth, strictly radially decreasing imprints, subject to the same positive temporal weights and the same feedback gain. Their \emph{entire nonlinear memory forces}, including all local derivatives, agree on an open ball in one common velocity-history space. Nevertheless, every admissible bounded-history trajectory of the first system undergoes exponential velocity arrest, whereas the second possesses an attracting rest state, a locally attracting nonzero translating speed, and an additional translating branch with a positive longitudinal characteristic exponent. Thus the result concerns an exact operator-level equality together with a rigorously established contrast in complete motions, not merely two different linearizations or an algebraic multiplicity example.

The mathematical context includes Volterra integral equations \cite{Gripenberg1990}, classical stability theory for nonlinear Volterra convolution equations \cite{Staffans1975}, positive-kernel spectral theory \cite{Naito2007,Ngoc2008}, infinite-delay phase spaces \cite{HaleKato1978}, and the classical fading-memory framework \cite{ColemanMizel1966,ColemanMizel1968}. The field--trajectory feedback is motivated by self-generated trail and pilot-wave models, including \cite{Sarkar2025,Oza2013,Lutier2026}. Existing hereditary theory supplies well-posedness and stability techniques; the present geometric construction identifies the precise nonlinear information that these equilibrium-level reductions do not retain. We first derive an exact geometry-to-velocity representation, then connect the complete traveling response to branch existence and stability. The resulting constructive separation theorem is followed by an infinite-memory flatness result and the corresponding classical control reductions.

For clarity, the assertions about moving branches are formulated at two distinct levels: exponential attraction is established for scalar speed histories with sufficiently small compatible initial perturbations, and a positive characteristic exponent is an assertion about the longitudinal variational equation. Neither assertion is used as a substitute for a global classification of all histories or for a description of basin boundaries. All results below concern the exact hereditary evolution.

\section{State histories and geometric memory}
For a continuously differentiable trajectory $x:\R\to\R^n$ let $x_t(\theta)=x(t+\theta)$, $\theta\le0$, and set
\begin{equation}\label{eq:increment}
 \Delta_sx(t)=x(t)-x(t-s),\qquad s\ge0.
\end{equation}
Given $m\in L^1(0,\infty)$, $m\ge0$, a spatial imprint $\Phi:\R^n\to\R$, and a nonnegative history-dependent weighting $\W(s;h)$, define
\begin{align}
 S_x(z,t)&=\int_0^\infty \W(s;x_t)\Phi(z-x(t-s))\dd s,\label{eq:field}\\
 \G[h]&=-\int_0^\infty\W(s;h)\nabla\Phi\bigl(h(0)-h(-s)\bigr)\dd s.\label{eq:geomforce}
\end{align}
Our basic dynamical law is
\begin{equation}\label{eq:main}
 \dot x(t)=\int_0^\infty m(s)\dot x(t-s)\dd s+\kappa\G[x_t],
 \quad\kappa>0.
\end{equation}
Notice that $\G[x_t]=-\nabla_zS_x(x(t),t)$: differentiation is in $z$, with the history and its weights held fixed. We call \eqref{eq:geomforce} a \emph{self-generated geometric hereditary force}. Its age selector is $s$; its geometric selector is $\Delta_sx(t)$.

\begin{definition}[Admissible histories]\label{def:history}
A past velocity $v_-\in C_b(( -\infty,0];\R^n)$ is admissible when a $C^1$ prehistory $x_-:(-\infty,0]\to\R^n$ satisfies $x_-'=v_-$ and all force integrals are finite. This broader class $C_b$ is used for existence and global-arrest statements. The uniformly continuous subclass $BUC(( -\infty,0];\R^n)$ is used below as the common Banach phase space for local operator differentiation and moving-branch stability. A \emph{classical solution} on $[0,T]$ is a continuous extension of $x_-$, differentiable for $t>0$, with continuous, bounded velocity on compact time intervals, satisfying \eqref{eq:main}. Compatibility at $t=0$ is assumed whenever continuity of the joined derivative is needed. Complete translating histories $x(t)=x_0+vt$ are considered separately and need not be bounded in position.
\end{definition}

For fixed $\W(s;h)=w(s)\ge0$, bounded $\Phi$ and $\nabla\Phi$, and $w\in L^1$, the field and force are defined even for unbounded translating histories. A convenient sufficient condition for the nonlinear factorization below is $\int_0^\infty s\W(s;x_t)\dd s<\infty$, uniform bounds on past velocities, and bounded spatial Hessian. For equilibrium differentiability we use the Banach space $X=BUC(( -\infty,0];\R^n)$ with its supremum norm. A concrete sufficient hypothesis is $\Phi\in C_b^2$ with uniformly continuous Hessian and $h\mapsto\W(\cdot;h)$ continuously Fr\'echet differentiable from $X$ into $L^1((0,\infty),(1+s)\dd s)$; then differentiation of \eqref{eq:geomforce} is justified. Moving reference histories are treated on affine translates $h_v+X$, where $h_v(-s)=x_0-vs$, and are not required to be bounded. For infinite-memory stability rates, exponential moments are stated explicitly.

\begin{remark}[Translations and signs]\label{rem:sign}
If the weight obeys $\W(s;h+c)=\W(s;h)$, the force is invariant under $x\mapsto x+c$. Accordingly, equilibrium points are not isolated. For a strictly decreasing radial imprint, $\nabla\Phi(\Delta_sx)$ points toward the historical state, while $-\kappa\nabla\Phi(\Delta_sx)$ points away from that state. The latter is repulsive along a straight track, but its instantaneous power can have either sign along curved or returning trajectories. Thus a universal ``memory-friction'' description is incorrect.
\end{remark}

\section{A common phase space for resting and moving states}\label{sec:phase}
A complete translating path $x(t)=x_0+vt$ is unbounded in position, while its velocity is constant. Thus an unweighted bounded-position history space cannot contain all the steady motions of interest. Passing to velocity histories removes this obstacle: rest corresponds to the zero function and a uniformly translating solution to a nonzero constant function, both on the same phase space. The position $x_0$ is retained as an additive coordinate.

Classical fading-memory spaces, developed in particular by Coleman and Mizel \cite{ColemanMizel1966,ColemanMizel1968}, allow histories to be measured with decreasing weights assigned to increasingly remote times. Here we first impose an unweighted bound on \emph{initial} velocity perturbations. This bound gives immediate control of every displacement integral needed in the nonlinear contraction arguments. Weighted fading-memory norms are then used to describe convergence of the \emph{evolved} histories. Starting directly with smallness in a fading norm would permit large perturbations in the distant past and would require additional continuity assumptions on the corresponding evolution operator. The phase space below is the uniformly continuous subclass of the broader admissible class introduced in Definition~\ref{def:history}; all local stability statements are made for histories in this Banach space. Set
\[
 \mathcal V=BUC(( -\infty,0];\R^n),\quad
 (\mathsf d_s v)=\int_0^s v(-u)\dd u.
\]
For fixed $w\ge0$ define
\begin{align}
 \mathfrak F_\Phi(v)&=-\int_0^\infty w(s)
     \nabla\Phi(\mathsf d_s v)\dd s,\label{eq:velocityforce}\\
 \mathfrak T_\Phi(v)&=\int_0^\infty m(s)v(-s)\dd s
        +\kappa\mathfrak F_\Phi(v).\label{eq:velocityoperator}
\end{align}
Every classical orbit with velocity $v(t)=\dot x(t)$ satisfies
\begin{equation}\label{eq:abstract_velocity}
 v(t)=\mathfrak T_\Phi(v_t),\qquad
 x(t)=x(0)+\int_0^t v(u)\dd u.
\end{equation}
Conversely, a continuous global solution of the first equation reconstructs a trajectory solving \eqref{eq:main}. The space $\mathcal V$ simultaneously contains the constant zero history (rest) and every constant nonzero history $v(\theta)\equiv v_0$ (uniform translation). Translating orbits become fixed histories in the reduced phase variable.

\begin{proposition}[Continuity and geometric Lipschitz bound]\label{prop:commonphase}
Suppose $m,w\in L^1(0,\infty)$ are nonnegative, $M_1=\int_0^\infty sw(s)\dd s<\infty$, $\nabla\Phi(0)=0$, and $\sup_z\norm{\nabla^2\Phi(z)}_{\op}\le L$. Then $\mathfrak T_\Phi:\mathcal V\to\R^n$ is globally Lipschitz and
\begin{equation}\label{eq:commonlip}
 \norm{\mathfrak T_\Phi(v)-\mathfrak T_\Phi(u)}
 \le(M_0+\kappa LM_1)\norm{v-u}_\infty.
\end{equation}
If $\Phi$ is radial and $\mathfrak F_\Phi$ uses $f(z)=a(\norm z)z$, the derivative at a constant history $v_0=re$ is
\begin{equation}\label{eq:derivmoving}
 D\mathfrak F_\Phi(v_0)z
 =\int_0^\infty w(s)Df(sv_0)
  \int_0^s z(-u)\dd u\dd s,
\end{equation}
provided $Df$ is bounded and continuous.
\end{proposition}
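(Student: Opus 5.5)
The plan is to treat the two terms of $\mathfrak T_\Phi$ separately and then control the geometric term through the displacement map $v\mapsto\mathsf d_s v$. Here $M_0=\int_0^\infty m(s)\dd s$.

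\textbf{Linear memory term.} This part is immediate:
\[
\Bigl\|\int_0^\infty m(s)\bigl(v(-s)-u(-s)\bigr)\dd s\Bigr\|\le M_0\norm{v-u}_\infty .
\]

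\textbf{Geometric term.} First, $\norm{\mathsf d_s v-\mathsf d_s u}\le s\norm{v-u}_\infty$. Next, the mean value inequality applied to $\nabla\Phi$, whose derivative $\nabla^2\Phi$ has operator norm at most $L$, gives
\[
\norm{\nabla\Phi(\mathsf d_s v)-\nabla\Phi(\mathsf d_s u)}\le Ls\norm{v-u}_\infty .
\]
Integrating against $w$ yields $\norm{\mathfrak F_\Phi(v)-\mathfrak F_\Phi(u)}\le LM_1\norm{v-u}_\infty$. Multiplying by $\kappa$ and adding the linear estimate gives \eqref{eq:commonlip}.

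\textbf{Well-definedness.} We still need the integral defining $\mathfrak F_\Phi(v)$ to converge for every $v\in\mathcal V$. Since $\nabla\Phi(0)=0$, the same estimate gives $\norm{\nabla\Phi(\mathsf d_s v)}\le Ls\norm v_\infty$. This is integrable against $w$ because $M_1<\infty$. The integrand is continuous in $s$, hence measurable, because $s\mapsto\mathsf d_s v$ is continuous. So $\mathfrak T_\Phi$ maps $\mathcal V$ into $\R^n$, and Lipschitz continuity implies continuity.

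\textbf{Derivative formula.} Write $\nabla\Phi=f$ with $f(z)=a(\norm z)z$, so that $-\nabla\Phi$ enters as $-f$ (the sign convention must be matched to the statement). At the constant history $v_0=re$ we have $\mathsf d_s v_0=sv_0$, and $\mathsf d_s(v_0+z)=sv_0+\mathsf d_s z$. The candidate derivative is
\[
z\mapsto\int_0^\infty w(s)Df(sv_0)\,\mathsf d_s z\dd s .
\]
This is a bounded linear map $\mathcal V\to\R^n$ with norm at most $\sup\norm{Df}\,M_1$.

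For the remainder, the integral form of Taylor's theorem gives
\[
f(sv_0+\mathsf d_s z)-f(sv_0)-Df(sv_0)\mathsf d_s z=\int_0^1\bigl[Df(sv_0+\tau\mathsf d_s z)-Df(sv_0)\bigr]\mathsf d_s z\dd\tau .
\]
Its norm is at most $s\norm z_\infty\,\omega_s(s\norm z_\infty)$, where $\omega_s$ is the local modulus of continuity of $Df$ near $sv_0$. Dividing by $\norm z_\infty$, the integrand is dominated by $2\sup\norm{Df}\,s\,w(s)\in L^1$. For each fixed $s$ it tends to $0$ as $\norm z_\infty\to0$, by continuity of $Df$ at $sv_0$. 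Dominated convergence (applied along sequences $z_k\to0$) then shows the remainder is $o(\norm z_\infty)$, which is Fréchet differentiability with the derivative in \eqref{eq:derivmoving}.

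\textbf{Main obstacle.} The delicate point is non-uniformity in this last step. Because $sv_0$ runs off to infinity, continuity of $Df$ is not uniform along the ray, so uniform continuity cannot be used. Dominated convergence resolves this, with the moment $M_1$ supplying the integrable majorant. The same estimates also cover the sign convention $-\kappa\,D\mathfrak F$ in $D\mathfrak T_\Phi$.
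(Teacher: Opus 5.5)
Your proposal is correct and follows the paper's argument: $\norm{\mathsf d_sv-\mathsf d_su}\le s\norm{v-u}_\infty$ together with the mean-value inequality gives \eqref{eq:commonlip}, and dominated convergence with a majorant proportional to $s\,w(s)$ (integrable since $M_1<\infty$) gives the Fr\'echet derivative \eqref{eq:derivmoving}. The only slip is sign bookkeeping: with the paper's convention $-\nabla\Phi(z)=a(\norm z)z$ you should take $f=-\nabla\Phi$ rather than $f=\nabla\Phi$, so that $\mathfrak F_\Phi(v)=\int_0^\infty w(s)f(\mathsf d_sv)\dd s$ and $D\mathfrak T_\Phi$ carries $+\kappa D\mathfrak F_\Phi$ rather than $-\kappa D\mathfrak F_\Phi$; with this choice your displayed candidate derivative already has the correct sign.
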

\begin{proof}
For every pair of histories, $\norm{\mathsf d_sv-\mathsf d_su}\le s\norm{v-u}_\infty$. The mean-value inequality for $\nabla\Phi$ and integration against $m,w$ yield \eqref{eq:commonlip}. For \eqref{eq:derivmoving}, differentiate the Nemytskii factor $f(\mathsf d_sv)$ and dominate the difference quotients by $s\norm{Df}_\infty w(s)\norm z_\infty$. The resulting integral converges by the first-moment hypothesis. Continuity of the derivative follows by dominated convergence.
\end{proof}

The position-based and velocity-based representations encode different local neighborhoods. A uniform bound on velocity increments controls $\mathsf d_s v$ by $s\norm v_\infty$, which grows with the memory age. This distinction becomes decisive for exponentially decaying infinite-memory kernels and is quantified in Theorem~\ref{thm:flat}.

\section{Exact nonlinear factorization}
The first step identifies where geometric structure enters without making a small-displacement approximation. A classical convolution kernel associates a fixed influence with every time lag. Here the spatial displacement accumulated during that lag changes the direction and magnitude of the feedback. The following exact identity rewrites this effect as a history-dependent velocity kernel. This provides the main bridge between the full spatial picture and hereditary velocity equations without suppressing the geometry. Define the symmetric matrix field
\begin{equation}\label{eq:matrixA}
 \mathsf A(z)=-\int_0^1\nabla^2\Phi(\theta z)\dd\theta.
\end{equation}
As $\nabla\Phi(0)=0$, the fundamental theorem of calculus gives the exact identity $-\nabla\Phi(z)=\mathsf A(z)z$.

\begin{theorem}[Exact geometry-to-velocity reduction]\label{thm:factor}
Suppose $\Phi\in C^2(\R^n)$, $\nabla\Phi(0)=0$, $\sup_z\norm{\mathsf A(z)}_{\op}<\infty$, and a classical solution has bounded past velocity with
\begin{equation}\label{eq:fubini_assump}
 \int_0^\infty s\W(s;x_t)\dd s<\infty.
\end{equation}
Then \eqref{eq:main} is equivalent along this solution to
\begin{align}
 \dot x(t)&=\int_0^\infty\K_x(t,r)\dot x(t-r)\dd r,\label{eq:velrep}\\[-2pt]
 \K_x(t,r)&=m(r)I+\kappa\int_r^\infty
  \W(s;x_t)\mathsf A(\Delta_sx(t))\dd s.
 \label{eq:kernel}
\end{align}
The effective kernel generally depends on the entire trajectory and is not a prescribed convolution kernel.
\end{theorem}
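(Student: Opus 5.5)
The plan is to rewrite only the geometric force term $\kappa\G[x_t]$; the term $\int_0^\infty m(s)\dot x(t-s)\dd s$ already has the required form with kernel $m(r)I$. Fix $t$ and abbreviate $w(s)=\W(s;x_t)$ and $\Delta_s=\Delta_sx(t)$. Since $\nabla\Phi(0)=0$, the identity $-\nabla\Phi(z)=\mathsf A(z)z$ recorded after \eqref{eq:matrixA} is exact, so
\[
 \G[x_t]=\int_0^\infty w(s)\mathsf A(\Delta_s)\Delta_s\dd s .
\]
Because $x$ is $C^1$ on the relevant past, the increment is an integral of the velocity,
\[
 \Delta_s=\int_0^s\dot x(t-r)\dd r ,
\]
by the fundamental theorem of calculus. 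Substituting gives the iterated integral
\[
 \int_0^\infty\int_0^s w(s)\mathsf A(\Delta_s)\dot x(t-r)\dd r\dd s
\]
over the triangle $\{0\le r\le s<\infty\}$.

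The key step is to interchange the order of integration by Fubini--Tonelli, which produces
\[
 \int_0^\infty\Bigl(\int_r^\infty w(s)\mathsf A(\Delta_s)\dd s\Bigr)\dot x(t-r)\dd r .
\]
Multiplying by $\kappa$ and adding the $m$-term yields exactly \eqref{eq:velrep} with the kernel \eqref{eq:kernel}. All manipulations are identities, so they reverse, and \eqref{eq:main} holds at $t$ if and only if \eqref{eq:velrep} does. This gives the claimed equivalence along the solution.

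The main obstacle is justifying the interchange, and I would handle it by absolute integrability of the integrand on the triangle. Set $\alpha=\sup_z\norm{\mathsf A(z)}_{\op}$ and $V=\sup_{\tau\le t}\norm{\dot x(\tau)}$. The latter is finite because the past velocity is bounded and the velocity is bounded on compact time intervals. Then
\[
 \norm{w(s)\mathsf A(\Delta_s)\dot x(t-r)}\le\alpha Vw(s),
\]
and integrating over $r\in[0,s]$ and then over $s$ gives the bound $\alpha V\int_0^\infty sw(s)\dd s$. This is finite by \eqref{eq:fubini_assump}, so Tonelli applies. The same estimate shows that $\int_r^\infty w(s)\mathsf A(\Delta_s)\dd s$ converges absolutely for a.e.\ $r$. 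It also shows that $r\mapsto\norm{\K_x(t,r)}$ is integrable in the sense that $\int_0^\infty\norm{\K_x(t,r)-m(r)I}\dd r\le\alpha\int_0^\infty sw(s)\dd s$. Hence the integral in \eqref{eq:velrep} is well defined.

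Measurability of $s\mapsto\mathsf A(\Delta_s)$ follows from continuity of $\nabla^2\Phi$, via \eqref{eq:matrixA}, and of $s\mapsto\Delta_s$. Finiteness of the $m$-term is part of admissibility, since $m\in L^1$ and the velocity is bounded.

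The closing remark, that the kernel generally depends on the entire trajectory and is not a prescribed convolution kernel, needs no separate proof. It follows directly from \eqref{eq:kernel}: the kernel depends on $x_t$ through $\W(s;x_t)$ and through $\Delta_sx(t)$. The kernel reduces to a convolution kernel only when $\mathsf A$ is constant, that is, for quadratic $\Phi$, and $\W$ is history independent.
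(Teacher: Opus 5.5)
Your proposal is correct and follows the paper's proof essentially step for step. You use the exact identity $-\nabla\Phi(z)=\mathsf A(z)z$, write $\Delta_sx(t)=\int_0^s\dot x(t-r)\dd r$, and exchange the $r$- and $s$-integrals over the triangle $0\le r\le s$, justified by the scalar majorant $\alpha V\int_0^\infty s\W(s;x_t)\dd s$ via Tonelli and Fubini; your added measurability and kernel-integrability remarks are fine, except that the bound on $\int_0^\infty\norm{\K_x(t,r)-m(r)I}\dd r$ needs the factor $\kappa$ you dropped.
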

\begin{proof}
From $-\nabla\Phi(z)=\mathsf A(z)z$ and absolute continuity,
\[
 \kappa\G[x_t]=\kappa\int_0^\infty\W(s;x_t)
 \mathsf A(\Delta_sx(t))\int_0^s\dot x(t-r)\dd r\dd s.
\]
If $\norm{\dot x(t-r)}\le V$ on the relevant past, the norm of the double integrand is bounded, after integration, by $\kappa V\norm{\mathsf A}_{\infty}\int s\W(s;x_t)\dd s$. This is finite by \eqref{eq:fubini_assump}; Tonelli's theorem for the scalar majorant and Fubini's theorem for the vector integrand apply. Exchanging $r$ and $s$ yields \eqref{eq:kernel}, exactly and without a Taylor remainder.
\end{proof}

For a radial $\Phi(z)=\phi(\norm z)$ with
\begin{equation}\label{eq:radiala}
 -\nabla\Phi(z)=a(\norm z)z,\qquad a(r)\ge0,
\end{equation}
the same identity admits a particularly transparent \emph{alternative factorization} (not necessarily the matrix \eqref{eq:matrixA}):
\begin{equation}\label{eq:radialkernel}
 \K_x(t,r)=\left[m(r)+\kappa\int_r^\infty
 \W(s;x_t)a(\norm{\Delta_sx(t)})\dd s\right]I.
\end{equation}
Spatial return distance and temporal age are inseparable along a generic trajectory. Even with a fixed temporal weight $w(s)$, the effective coefficient in \eqref{eq:radialkernel} changes as the particle visits different regions of its own trail. Thus the time kernel inside the exact velocity equation is generated by the evolving history, although the underlying temporal decay law is prescribed. This is the geometric information that will be lost when the full operator is replaced by its equilibrium tangent.

The instantaneous memory power is, exactly,
\begin{equation}\label{eq:power}
 \kappa\dot x(t)^\top\G[x_t]=\kappa\int_0^\infty
 \W(s;x_t)a(\norm{\Delta_sx})\Delta_sx^\top\dot x(t)\dd s.
\end{equation}
For a straight track every integrand is nonnegative; after a turn, $\Delta_sx^\top\dot x(t)$ can be negative. Thus the same positive radial imprint can inject or remove energy depending on trajectory geometry.

\section{Well-posedness and global velocity arrest}
The first global stability question is whether a trajectory can eventually lose all speed even while it retains a spatial record of its past. We call this \emph{velocity arrest}: $\dot x(t)$ decays and the position approaches a history-dependent limit $x_\infty$. Translation invariance precludes choosing one distinguished destination in advance. The next results establish a solution class and a sufficient uniform arrest criterion; later we shall construct a second imprint that violates this global conclusion while sharing the same complete local force at rest.

\begin{proposition}[A well-posed subcritical fixed-kernel class]\label{prop:exist}
Let $\W(s;h)=w(s)\ge0$, $M_0=\int_0^\infty m(s)\dd s<\infty$, $M_1=\int_0^\infty sw(s)\dd s<\infty$, and suppose $\nabla\Phi(0)=0$ and $\nabla\Phi$ is globally Lipschitz with constant $L$. If
\begin{equation}\label{eq:existcrit}
 M_0+\kappa L M_1<1,
\end{equation}
then every bounded continuous past velocity gives a unique bounded future velocity solving \eqref{eq:main}, with the past position fixing the additive translation. The joined trajectory is $C^1$ at zero if the initial velocity is compatible with the equation.
\end{proposition}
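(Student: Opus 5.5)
We reduce the problem to the velocity formulation \eqref{eq:abstract_velocity} and apply a contraction argument in a space of bounded future velocities. Fix an admissible past velocity $v_-\in C_b((-\infty,0];\R^n)$ together with its prehistory $x_-$. For $T\in(0,\infty]$, let $Y_T$ be the set of continuous functions $v:[0,T]\to\R^n$ for $T<\infty$, or bounded continuous functions on $[0,\infty)$ for $T=\infty$, with the supremum norm. For $v\in Y_T$ write $\hat v$ for the concatenation of $v_-$ on $(-\infty,0]$ with $v$ on $(0,T]$; it may jump at $0$ unless compatibility holds. Set
\[
 (\Gamma v)(t)=\int_0^\infty m(s)\hat v(t-s)\dd s-\kappa\int_0^\infty w(s)\nabla\Phi\Bigl(\int_0^s\hat v(t-u)\dd u\Bigr)\dd s,\qquad t\in(0,T].
\]
Since $\Delta_sx(t)=\int_0^s\hat v(t-u)\dd u$, a fixed point of $\Gamma$ gives, through $x(t)=x_-(0)+\int_0^t v$, a classical solution of \eqref{eq:main}. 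Conversely, every classical solution with bounded velocity yields such a fixed point. The past position enters only through $x_-(0)$, because with $\W=w$ the force depends only on increments.

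\textbf{Well-definedness.} We first show that $\Gamma$ maps $Y_\infty$ into itself. Boundedness follows from $\norm{\nabla\Phi(z)}\le L\norm z$, which uses $\nabla\Phi(0)=0$. Together with $\norm{\int_0^s\hat v(t-u)\dd u}\le s\norm{\hat v}_\infty$, this gives
\[
 \norm{\Gamma v}_\infty\le(M_0+\kappa LM_1)\max(\norm{v_-}_\infty,\norm v_\infty).
\]
Continuity in $t$ on $(0,\infty)$ is where some care is needed, because $\hat v$ may be discontinuous at $0$ and merely bounded continuous, not uniformly continuous, in the past. For the $m$-term we write
\[
 \int_0^\infty m(s)\hat v(t-s)\dd s=\int_{-\infty}^t m(t-\sigma)\hat v(\sigma)\dd\sigma.
\]
Continuity then follows from continuity of translations in $L^1$ and boundedness of $\hat v$. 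For the $w$-term, the inner integral $\int_{t-s}^t\hat v$ is jointly continuous in $(s,t)$, so dominated convergence applies with majorant $w(s)Ls\,C$, which is integrable since $M_1<\infty$. The limit $t\downarrow0$ exists by the same argument. It equals $v_-(0)$ exactly when the compatibility condition holds, and this yields the final $C^1$ claim.

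\textbf{Contraction and uniqueness.} Let $u,v\in Y_\infty$ share the same past $v_-$. Then $\hat u-\hat v$ vanishes on $(-\infty,0]$, and the estimate in the proof of Proposition~\ref{prop:commonphase} applies pointwise in $t$ to the shifted histories $\hat v_t,\hat u_t$. Only Lipschitz continuity of $\nabla\Phi$ is needed, not a bounded Hessian. It gives
\[
 \norm{\Gamma u-\Gamma v}_\infty\le(M_0+\kappa LM_1)\norm{u-v}_\infty .
\]
Under \eqref{eq:existcrit} this is a strict contraction on the complete space $Y_\infty$, and Banach's theorem yields a unique bounded global velocity. For uniqueness among classical solutions that are only bounded on compact intervals, we run the same estimate on $Y_T$ for each finite $T$. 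The integrals over $s>t$ involve only the common past, so the constant is unchanged and the solutions agree on $[0,T]$ for every $T$.

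The main obstacle is the continuity of $\Gamma v$ for merely $C_b$ (not BUC) pasts, together with the possible jump at $t=0$. This is why continuity is proved through translation continuity of the kernels rather than through uniform continuity of $\hat v$.
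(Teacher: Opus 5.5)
Your proposal is correct and takes the paper's approach: a Banach fixed-point argument for the concatenated velocity map on bounded continuous future velocities, with contraction constant $M_0+\kappa LM_1$, self-mapping from $\nabla\Phi(0)=0$ plus the Lipschitz bound, and continuity in $t$ by dominated convergence. Your extra steps are refinements of details the paper leaves implicit; the paper's plain dominated-convergence argument already copes with the possible jump of $\hat v$ at $0$, since that jump is a null set in $s$. These refinements are the $L^1$-translation treatment of the $m$-term and the finite-$T$ uniqueness among locally bounded classical solutions.
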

\begin{proof}
Let $v$ be an unknown bounded continuous future velocity and extend it by the prescribed past velocity $v_-$ to $\widetilde v$ on $(-\infty,\infty)$. The displacement entering the force is exactly $\int_0^s\widetilde v(t-r)\dd r$. Hence the right-hand side of \eqref{eq:main} defines the map
\begin{align*}
 (Tv)(t)={}&\int_0^\infty m(s)\widetilde v(t-s)\dd s\\
 &-\kappa\int_0^\infty w(s)\nabla\Phi\left(
 \int_0^s\widetilde v(t-r)\dd r\right)\dd s.
\end{align*}
Since $\nabla\Phi(0)=0$, the estimate $\norm{\nabla\Phi(z)}\le L\norm z$ makes $T$ a self-map of $C_b([0,\infty);\R^n)$. For two future velocities $v_1,v_2$ with the same prehistory, all differences in the integrals are bounded by $\norm{v_1-v_2}_\infty$, giving
\[
 \norm{Tv_1-Tv_2}_\infty\le
 (M_0+\kappa LM_1)\norm{v_1-v_2}_\infty.
\]
Dominated convergence ensures continuity in $t$; the inequality \eqref{eq:existcrit} makes $T$ a contraction. Its unique fixed point is the required future velocity. Integrating it gives the trajectory and the indicated compatibility statement.
\end{proof}

This Banach fixed-point construction provides an explicit global solution class. History-adaptive weights are handled by the uniform estimates of Theorem~\ref{thm:arrest} along their admissible classical solutions.

\begin{theorem}[Uniform history-adaptive arrest]\label{thm:arrest}
Let $x$ be a global classical solution with $V_-=\sup_{t\le0}\norm{\dot x(t)}<\infty$. Let $L_A=\sup_z\norm{\mathsf A(z)}_{\op}<\infty$. Suppose that for some $\gamma>0$
\begin{align}
 Q_\gamma:={}&\int_0^\infty e^{\gamma s}m(s)\dd s\\
 &+\kappa L_A\sup_{t\ge0}\int_0^\infty
 \frac{e^{\gamma s}-1}{\gamma}\W(s;x_t)\dd s<1.
 \label{eq:Qgamma}
\end{align}
Then there exists $x_\infty\in\R^n$ such that
\begin{equation}\label{eq:arrest}
 \norm{\dot x(t)}\le V_-e^{-\gamma t},\qquad
 \norm{x(t)-x_\infty}\le \frac{V_-}{\gamma}e^{-\gamma t}.
\end{equation}
If \eqref{eq:Qgamma} holds uniformly over admissible histories, this conclusion holds for every admissible global solution.
\end{theorem}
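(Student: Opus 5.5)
The plan is to work with the exact velocity representation of Theorem~\ref{thm:factor}. The hypotheses of the present theorem give bounded past velocity and, since $(e^{\gamma s}-1)/\gamma\ge s$, the first-moment condition \eqref{eq:fubini_assump} uniformly in $t\ge0$. Along the solution we therefore have $\dot x(t)=\int_0^\infty\K_x(t,r)\dot x(t-r)\dd r$. Write $\omega(t,s)=\W(s;x_t)$. Then
\[
\norm{\K_x(t,r)}_{\op}\le m(r)+\kappa L_A\int_r^\infty \omega(t,s)\dd s .
\]
A second Tonelli exchange gives the identity
\[
\int_0^\infty e^{\gamma r}\int_r^\infty \omega(t,s)\dd s\dd r=\int_0^\infty\frac{e^{\gamma s}-1}{\gamma}\,\omega(t,s)\dd s .
\]
Hence the exponentially weighted kernel mass satisfies $\int_0^\infty e^{\gamma r}\norm{\K_x(t,r)}_{\op}\dd r\le Q_\gamma<1$ uniformly in $t\ge0$. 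This is exactly where \eqref{eq:Qgamma} is designed to enter.

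Next I will run a continuation (barrier) argument for the weighted quantity $y(t)=e^{\gamma t}\norm{\dot x(t)}$. For $t\le0$ we have $y(t)\le V_-e^{\gamma t}\le V_-$. Fix $\varepsilon>0$ and let $t^*=\sup\{T\ge0:\ y(t)<V_-+\varepsilon\text{ on }[0,T]\}$. This is positive by continuity of $\dot x$, or by compatibility at $0$; alternatively one checks $y(0)\le Q_\gamma V_-<V_-+\varepsilon$ directly from the equation. On $[0,t^*)$ I will estimate
\[
y(t)\le\int_0^\infty e^{\gamma r}\norm{\K_x(t,r)}_{\op}\,e^{\gamma(t-r)}\norm{\dot x(t-r)}\dd r\le Q_\gamma\,(V_-+\varepsilon).
\]
Both past times ($t-r\le0$) and future times in $[0,t]$ are covered by the bound $\sup_{\tau\le t}y(\tau)\le V_-+\varepsilon$. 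If $t^*<\infty$, continuity would force $y(t^*)=V_-+\varepsilon$, contradicting $y(t^*)\le Q_\gamma(V_-+\varepsilon)<V_-+\varepsilon$. So $t^*=\infty$. Letting $\varepsilon\to0$ gives $\norm{\dot x(t)}\le V_-e^{-\gamma t}$, and in fact the sharper factor $Q_\gamma V_-$, though only the stated bound is needed.

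Position convergence then follows routinely. Since $\dot x\in L^1(0,\infty)$, the limit $x_\infty=x(0)+\int_0^\infty\dot x$ exists, and $\norm{x(t)-x_\infty}\le\int_t^\infty V_-e^{-\gamma u}\dd u=V_-e^{-\gamma t}/\gamma$. The final clause is immediate: if $Q_\gamma<1$ holds with the supremum taken over all admissible histories, the argument applies verbatim to each admissible global solution.

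I expect the main obstacle to be justifying the weighted kernel estimate rigorously. Because $\W$ depends on the history, the kernel is only known along the solution, and the Fubini exchanges need $\int s\,\omega(t,s)\dd s<\infty$ together with the a priori boundedness of $\dot x$ on $(-\infty,t]$. The classical-solution assumption gives boundedness on compacts, and that suffices for each fixed $t$. A secondary point is the continuity of $y$ at $t=0$ when the joined derivative may be discontinuous. I would handle this by using the supremum over $(0,T]$ together with right-limits, which the equation itself controls.
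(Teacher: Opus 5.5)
Your proposal is correct and follows essentially the paper's route: an exponentially weighted comparison on the factorized velocity equation, with the Tonelli identity turning the tail mass into the $(e^{\gamma s}-1)/\gamma$ term and a supremum/continuation argument closing the bound. The only cosmetic difference is that the paper works with $B_T=\max\{V_-,\sup_{0\le t\le T}e^{\gamma t}\norm{\dot x(t)}\}$ and gets $B_T\le Q_\gamma B_T$, which needs only finiteness of this supremum rather than continuity, so it sidesteps the $t=0$ join issue you handle with the $\varepsilon$-barrier.
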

\begin{proof}
Theorem~\ref{thm:factor} and its scalar majorant imply
\begin{align*}
\norm{\dot x(t)}\le {}&\int_0^\infty m(s)\norm{\dot x(t-s)}\dd s\\
 &+\kappa L_A\int_0^\infty\W(s;x_t)
  \int_0^s\norm{\dot x(t-r)}\dd r\dd s.
\end{align*}
Fix $T>0$, and let
\[
 B_T=\max\bigl\{V_-,\sup_{0\le t\le T}e^{\gamma t}\norm{\dot x(t)}\bigr\}.
\]
The bound $\norm{\dot x(u)}\le B_Te^{-\gamma u}$ holds for all $u\le T$: for $u\le0$ the right side is at least $V_-$, while on $[0,T]$ it follows by definition. Multiplication by $e^{\gamma t}$ gives
\begin{align*}
 e^{\gamma t}\norm{\dot x(t)}\le B_T\biggl[&
 \int e^{\gamma s}m(s)\dd s\\
 &+\kappa L_A\int\frac{e^{\gamma s}-1}{\gamma}
 \W(s;x_t)\dd s\biggr].
\end{align*}
The bracket is at most $Q_\gamma$, by hypothesis. If the maximum defining $B_T$ were attained above $V_-$, continuity would yield $B_T\le Q_\gamma B_T<B_T$, a contradiction. Hence $B_T\le V_-$. Sending $T\to\infty$ proves the first bound. Finally $\int_t^\infty\norm{\dot x(u)}\dd u\le V_-e^{-\gamma t}/\gamma$ gives existence of $x_\infty$ and the second estimate.
\end{proof}

\begin{remark}[Relation to small gain]
Condition \eqref{eq:Qgamma} is a trajectory-uniform weighted small-gain inequality. The exact factorization also admits sharpened trajectory-dependent estimates through the radial return profile $a(\norm{\Delta_sx})$ and its anisotropic extensions.
\end{remark}

\begin{corollary}[Convergence of spatial memory]\label{cor:fieldconv}
Assume in addition $\W=w\in L^1(0,\infty)$ fixed and $\Phi,\nabla\Phi\in BUC(\R^n)$. Then
\begin{align}
 \norm{S_x(\cdot,t)-W_0\Phi(\cdot-x_\infty)}_\infty&\to0,\label{eq:Sconv}\\
 \norm{\nabla S_x(\cdot,t)-W_0\nabla\Phi(\cdot-x_\infty)}_\infty&\to0,
\end{align}
where $W_0=\int_0^\infty w(s)\dd s$.
\end{corollary}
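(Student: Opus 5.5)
The plan is to write the field difference as a single integral over memory ages and then split that integral into a finite-age part and a tail. Under the hypotheses of Theorem~\ref{thm:arrest}, which are assumed to remain in force, the trajectory has a limit $x_\infty$ with $\norm{x(t)-x_\infty}\le (V_-/\gamma)e^{-\gamma t}$. Because $\W=w$ is fixed, we have $W_0\Phi(z-x_\infty)=\int_0^\infty w(s)\Phi(z-x_\infty)\dd s$, and therefore
\[
 S_x(z,t)-W_0\Phi(z-x_\infty)=\int_0^\infty w(s)\bigl[\Phi(z-x(t-s))-\Phi(z-x_\infty)\bigr]\dd s .
\]
The same identity holds with $\nabla\Phi$ in place of $\Phi$. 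Differentiating under the integral is legitimate because $w\in L^1$ and $\nabla\Phi$ is bounded. This gives $\nabla S_x(\cdot,t)=\int w(s)\nabla\Phi(\cdot-x(t-s))\dd s$.

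Both claims then follow from one lemma: if $\Psi\in BUC(\R^n)$ has modulus of continuity $\omega_\Psi$, then
\[
 \sup_z\norm{\Psi(z-x(t-s))-\Psi(z-x_\infty)}\le\omega_\Psi\bigl(\norm{x(t-s)-x_\infty}\bigr)\le 2\norm\Psi_\infty .
\]
The point of using uniform continuity is that this bound is uniform in $z$ and depends only on the displacement.

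Fix $\varepsilon>0$. Choose $S$ so large that $2\norm\Psi_\infty\int_S^\infty w<\varepsilon$; this handles the tail, which includes the portion $s>t$ where $x(t-s)$ lies in the prehistory and is not close to $x_\infty$. For $s\in[0,S]$ and $t\ge S$, we have $t-s\ge t-S\ge0$. Hence $\norm{x(t-s)-x_\infty}\le (V_-/\gamma)e^{-\gamma(t-S)}$, which tends to $0$ uniformly in $s\in[0,S]$. The finite-age part is therefore bounded by $W_0\,\omega_\Psi\bigl((V_-/\gamma)e^{-\gamma(t-S)}\bigr)\to0$. Thus $\limsup_{t\to\infty}\norm{\cdot}_\infty\le\varepsilon$ for every $\varepsilon>0$, and the limit is zero.

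The only delicate point is the prehistory. Nothing is assumed about $x(u)$ for $u\le0$ except that it is a $C^1$ path with bounded velocity, so it may wander arbitrarily far. That is why the tail must be controlled by boundedness of $\Phi$ and $\nabla\Phi$ together with integrability of $w$, and not by any estimate on positions. Everything else is routine.
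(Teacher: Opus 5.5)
Your proposal is correct and follows essentially the paper's argument. Uniform continuity gives, for each fixed age $s$, a $z$-uniform bound on the imprint difference that tends to zero, and $2\norm\Phi_\infty w(s)$ controls the integral. Your explicit split at age $S$, using the rate from Theorem~\ref{thm:arrest} on $[0,S]$, is a hand-rolled version of the paper's dominated-convergence step. It has the minor advantages of avoiding any measurability question about the $z$-supremum and of justifying the differentiation of $S_x$ under the integral.
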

\begin{proof}
For each fixed $s\ge0$, the argument $t-s$ tends to infinity, so $x(t-s)\to x_\infty$. Uniform continuity of $\Phi$ implies that the spatial supremum of the difference between the two translated imprints tends to zero. The difference is bounded by $2\norm\Phi_\infty$. Dominated convergence in $s$ with weight $w(s)$ proves the first assertion. The same argument applied to $\nabla\Phi$ proves the second.
\end{proof}

For adaptive $\W$, convergence of the state alone is insufficient: the weights themselves must converge (for example in $L^1$) to recover an asymptotic field. The theorem therefore distinguishes arrest of the trajectory from convergence of the jointly evolving memory state.

\section{Tangent theory and its geometric blind spots}
A reader familiar with linear control may ask whether the complete force can be summarized by one transfer function. Near rest this is indeed possible: the spatial imprint contributes its Hessian at the origin and the temporal weight contributes its Laplace transform. The reduction correctly describes the equilibrium tangent problem. Yet it cannot detect the forces exerted after a finite displacement from earlier states. The following calculation makes the scope of the linear description precise.

Let $h_\ast(\theta)\equiv x_\ast$ be a constant history. Suppose the weight map $h\mapsto\W(\cdot;h)$ is Fr\'echet differentiable in a weighted $L^1$ space around $h_\ast$, with domination sufficient for the product rule in \eqref{eq:geomforce}. Write $w_\ast(s)=\W(s;h_\ast)$.

\begin{proposition}[Equilibrium blindness to weight adaptation]\label{prop:blind}
If $\Phi\in C^2$ and $\nabla\Phi(0)=0$, then
\begin{equation}\label{eq:blind}
 D\G[h_\ast]y=-\nabla^2\Phi(0)
 \int_0^\infty w_\ast(s)[y(0)-y(-s)]\dd s.
\end{equation}
In particular, $D_h\W(\cdot;h_\ast)$ is absent. Different adaptive weights with the same values at constant histories have identical equilibrium force linearizations.
\end{proposition}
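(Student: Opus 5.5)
The plan is to differentiate \eqref{eq:geomforce} along the line $h_\ast+\varepsilon y$ and apply the product rule to the integrand $\W(s;h)\nabla\Phi(h(0)-h(-s))$. The basic observation is that a constant history has zero increments: $h_\ast(0)-h_\ast(-s)=0$ for every $s$. Writing $h_\varepsilon=h_\ast+\varepsilon y$, the increment of $h_\varepsilon$ is exactly $\varepsilon[y(0)-y(-s)]$, so
\[
 \G[h_\varepsilon]=-\int_0^\infty\W(s;h_\varepsilon)\nabla\Phi\bigl(\varepsilon[y(0)-y(-s)]\bigr)\dd s .
\]
I would split the difference quotient $\varepsilon^{-1}(\G[h_\varepsilon]-\G[h_\ast])$ into two terms. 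Since $\nabla\Phi(0)=0$, we have $\G[h_\ast]=0$. Adding and subtracting $w_\ast(s)\nabla\Phi(\varepsilon[\cdot])$ then gives a \emph{weight-variation} term
\[
 -\int_0^\infty\frac{\W(s;h_\varepsilon)-w_\ast(s)}{\varepsilon}\,\nabla\Phi\bigl(\varepsilon[y(0)-y(-s)]\bigr)\dd s
\]
and an \emph{imprint-variation} term
\[
 -\int_0^\infty w_\ast(s)\,\frac{\nabla\Phi(\varepsilon[y(0)-y(-s)])}{\varepsilon}\dd s .
\]

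For the imprint-variation term, $C^2$ regularity gives pointwise convergence of the integrand to $\nabla^2\Phi(0)[y(0)-y(-s)]$. For domination, note that $y\in X$ is bounded, so $\norm{y(0)-y(-s)}\le2\norm y_\infty$ and all arguments lie in a fixed compact ball. On that ball $\nabla^2\Phi$ is bounded, say by $L$, so the quotient is bounded by $2L\norm y_\infty w_\ast(s)$, which is integrable. Dominated convergence then yields exactly the right-hand side of \eqref{eq:blind}.

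For the weight-variation term, the second factor is $O(\varepsilon)$ uniformly in $s$: $\norm{\nabla\Phi(\varepsilon[\cdot])}\le 2L\varepsilon\norm y_\infty$. Differentiability of $\W$ into a (weighted) $L^1$ space gives $\norm{\W(\cdot;h_\varepsilon)-w_\ast}_{L^1}=O(\varepsilon)$. Hence the difference quotient of the weight is bounded in $L^1$, and its product with a factor that is uniformly $O(\varepsilon)$ tends to zero. This is exactly where $D_h\W(\cdot;h_\ast)$ disappears: it is multiplied by $\nabla\Phi(0)=0$.

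This establishes the Gateaux derivative. Upgrading to the Fréchet derivative means repeating both estimates uniformly over $\norm y_\infty\le1$, which works because every bound above depends only on $\norm y_\infty$. The only genuine obstacle is that the standing hypothesis controls $\W$ only in $L^1((0,\infty),(1+s)\dd s)$ and gives nothing pointwise. For this reason the estimate must be organized so that only the $L^1$ norm of the weight increment is used, paired against a sup bound on $\nabla\Phi$; I do this rather than try to differentiate under the integral pointwise.

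The final claim follows immediately. Formula \eqref{eq:blind} involves $\W$ only through $w_\ast=\W(\cdot;h_\ast)$, so two adaptive weights that agree on constant histories have identical equilibrium linearizations.
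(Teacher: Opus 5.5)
Your proposal is correct and follows the paper's argument: the product-rule term containing $D_h\W$ is paired with $\nabla\Phi$ evaluated at an $O(\varepsilon)$ increment, so it vanishes because $\nabla\Phi(0)=0$, and the Hessian term passes through the integral by domination. Your $L^1$-against-sup pairing is a more careful version of the paper's one-line ``weighted domination'' justification; for the Fréchet upgrade, though, the uniformity of the imprint term comes from continuity of $\nabla^2\Phi$ at $0$, i.e.\ $\norm{\nabla\Phi(d)-\nabla^2\Phi(0)d}=o(\norm d)$, rather than from dominated convergence itself.
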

\begin{proof}
Differentiate the integrand $\W(s;h)\nabla\Phi(h(0)-h(-s))$ at $h_\ast$. The product-rule contribution containing $D_h\W$ is multiplied by $\nabla\Phi(0)=0$, and hence vanishes. The remaining term is the displayed Hessian action. Weighted domination permits passage of the derivative through the integral.
\end{proof}

Away from equilibrium, the missing information reappears explicitly. The following identity makes the contrast precise.

\begin{proposition}[Full history-dependent first variation]\label{prop:vari}
At a history $h$ where differentiation is valid, put $d_s=h(0)-h(-s)$ and $\delta_sy=y(0)-y(-s)$. Then
\begin{align*}
 D\G[h]y={}&-\int_0^\infty\W(s;h)
  \nabla^2\Phi(d_s)\delta_sy\dd s\label{eq:fullvar}\\[-2pt]
 &-\int_0^\infty D_h\W(s;h)[y]\nabla\Phi(d_s)\dd s.
\end{align*}
For fixed $w$ the second line vanishes, but the first retains the full, position-dependent Hessian $\nabla^2\Phi(d_s)$.
\end{proposition}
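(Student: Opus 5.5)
The plan is to compute the Fréchet derivative of $\G$ at a general history $h$ by differentiating the integrand $g(s;h)=\W(s;h)\nabla\Phi(d_s(h))$ pointwise in $s$. We then pass the derivative through the $s$-integral by dominated convergence, exactly as in Proposition~\ref{prop:blind}, but now without the simplification $\nabla\Phi(d_s)=0$.

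For fixed $s$, the map $h\mapsto d_s(h)=h(0)-h(-s)$ is bounded linear from $X$ to $\R^n$, with derivative $y\mapsto\delta_sy$ and $\norm{\delta_sy}\le 2\norm y_\infty$. The chain rule then gives $D_h[\nabla\Phi(d_s(h))]y=\nabla^2\Phi(d_s)\delta_sy$. The product rule, applied to the scalar factor $\W(s;h)$ and the vector factor $\nabla\Phi(d_s)$, yields the pointwise derivative
\[
 D_hg(s;h)y=\W(s;h)\nabla^2\Phi(d_s)\delta_sy+D_h\W(s;h)[y]\,\nabla\Phi(d_s).
\]
Inserting the minus sign from \eqref{eq:geomforce} gives the two displayed lines, provided differentiation under the integral is justified.

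The justification is the only real step. I would work under the standing hypothesis of Section~2: $\Phi\in C_b^2$ with uniformly continuous Hessian, and $h\mapsto\W(\cdot;h)$ of class $C^1$ from $X$ into $L^1((0,\infty),(1+s)\dd s)$.

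\emph{The Hessian term.} Write the remainder
\[
 \nabla\Phi(d_s+\delta_sy)-\nabla\Phi(d_s)-\nabla^2\Phi(d_s)\delta_sy
 =\int_0^1\bigl[\nabla^2\Phi(d_s+\theta\delta_sy)-\nabla^2\Phi(d_s)\bigr]\delta_sy\dd\theta .
\]
Its norm is at most $\omega(2\norm y_\infty)\,2\norm y_\infty$, where $\omega$ is the modulus of continuity of $\nabla^2\Phi$. Multiplying by $\W(s;h)$ and integrating shows this remainder is $o(\norm y_\infty)$ uniformly.

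\emph{The weight term.} Write
\[
 \W(s;h+y)\nabla\Phi(d_s+\delta_sy)-\W(s;h)\nabla\Phi(d_s)
 =[\W(s;h+y)-\W(s;h)]\nabla\Phi(d_s+\delta_sy)+\W(s;h)[\cdots].
\]
The first piece equals $D_h\W[y]\nabla\Phi(d_s)$ plus errors of two kinds. One is $(\W(h+y)-\W(h)-D\W[y])\nabla\Phi$, which is $o(\norm y)$ in $L^1$ since $\nabla\Phi$ is bounded. The other is $D\W[y]\,(\nabla\Phi(d_s+\delta_sy)-\nabla\Phi(d_s))$, which is $O(\norm y^2)$ because $\nabla^2\Phi$ is bounded. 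Boundedness of $\nabla\Phi$, or alternatively the bound $\norm{\nabla\Phi(d_s)}\le L\norm{d_s}\le Ls\norm{h'}$ paired with the $(1+s)$ weight, keeps every integral finite.

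I expect the main obstacle to be this domination for histories with large displacements $d_s$, for example along affine translates $h_v+X$ where $d_s\sim sv$. There the $s$-weighted $L^1$ control of $D_h\W$ is exactly what is needed, so I would state the validity of differentiation as the hypothesis under which the formula holds. Finally, for fixed $w$ we have $D_h\W=0$, so the second line vanishes. The first line keeps $\nabla^2\Phi(d_s)$ evaluated at the actual displacements, rather than at $0$ as in \eqref{eq:blind}.
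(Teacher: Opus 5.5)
Your proposal is correct and takes the same route as the paper: apply the product and chain rules to the integrand of \eqref{eq:geomforce}, then pass the derivative under the $s$-integral using the domination supplied by the Section~2 hypotheses. The paper gives this in one line. Your remainder estimates fill in the details it leaves implicit: uniform continuity of $\nabla^2\Phi$ controls the Hessian term, and $L^1$-Fr\'echet differentiability of $\W$, together with boundedness of $\nabla\Phi$, controls the weight term.
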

\begin{proof}
Apply the product and chain rules under the integral sign to \eqref{eq:geomforce}. Both integral terms are absolutely convergent under the stated differentiability domination.
\end{proof}

For example, the translation-invariant adaptation law
\begin{equation}\label{eq:adaptive}
 \W_\varepsilon(s;h)=w_0(s)
 \left[1+\varepsilon\frac{\norm{h(0)-h(-s)}^2}
 {1+\norm{h(0)-h(-s)}^2}\right]
\end{equation}
coincides with $w_0$ at constant histories. It satisfies $w_0\le\W_\varepsilon\le(1+\varepsilon)w_0$ and has the same linearization for every $\varepsilon\ge0$. If $m(s)=\rho e^{-s}$, $w_0(s)=e^{-s}$, and $\Phi(z)=e^{-\norm z^2/2}$, the scalar radial version of Theorem~\ref{thm:arrest} applies when $\rho+\kappa(1+\varepsilon)<1$ (choose $0<\gamma<1-\rho-\kappa(1+\varepsilon)$). The local linear boundary remains $\rho+\kappa=1$. The adaptive weight changes the nonlinear force throughout moving histories while keeping the equilibrium tangent operator fixed. The displayed global inequality gives an explicit sufficient arrest region.

For fixed $w$ write $H=-\nabla^2\Phi(0)\succeq0$ and $q(r)=\int_r^\infty w(s)\dd s$. The equilibrium variation is
\begin{align}\label{eq:restlinear}
 \dot y(t)={}&\int_0^\infty m(s)\dot y(t-s)\dd s\notag\\
 &+\kappa H\int_0^\infty w(s)
 [y(t)-y(t-s)]\dd s.
\end{align}
In a positive-eigenvalue direction $Hv=hv$, a nonzero modal exponent obeys
\begin{equation}\label{eq:char}
 1=\widehat m(\lambda)+\kappa h\widehat q(\lambda),
 \quad\widehat q(\lambda)=\int_0^\infty e^{-\lambda r}q(r)\dd r.
\end{equation}
The factor $\lambda=0$ removed in deriving \eqref{eq:char} corresponds to spatial translation. It must not be conflated with a growing or decaying mode.

\begin{proposition}[Classical positive-kernel threshold]\label{prop:spec}
Let $w,m\ge0$ be integrable and let $M_1=\int_0^\infty sw(s)\dd s<\infty$. Put $h_{\max}=\lambda_{\max}(H)>0$. If $M_0+\kappa h_{\max}M_1<1$, \eqref{eq:char} has no root with $\Re\lambda\ge0$ in any eigendirection. If $M_0+\kappa h_{\max}M_1>1$, it has a positive real root in the $h_{\max}$ eigendirection, unique on the positive real axis when the kernel is nontrivial. At equality the nontranslational characteristic factor reaches zero at $\lambda=0$.
\end{proposition}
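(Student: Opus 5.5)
Set $F_h(\lambda)=\widehat m(\lambda)+\kappa h\widehat q(\lambda)$ for an eigenvalue $h\in[0,h_{\max}]$ of $H$. The whole argument rests on two elementary properties of this function: monotonicity on the positive real axis and domination on the closed right half-plane.

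First I record the value at the origin. Since $q(r)=\int_r^\infty w(s)\dd s$, Tonelli gives
\[
\widehat q(0)=\int_0^\infty q(r)\dd r=\int_0^\infty s\,w(s)\dd s=M_1,
\]
so $F_h(0)=M_0+\kappa hM_1$. Because $m,q\ge0$, for $\Re\lambda\ge0$ we have
\[
|F_h(\lambda)|\le\int_0^\infty e^{-\Re\lambda\, r}\bigl(m(r)+\kappa h q(r)\bigr)\dd r\le F_h(0)\le M_0+\kappa h_{\max}M_1 .
\]
Here $h\le h_{\max}$ holds because $H\succeq0$.

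\emph{Subcritical case.} If $M_0+\kappa h_{\max}M_1<1$, the bound above gives $|F_h(\lambda)|<1$ throughout $\Re\lambda\ge0$. Hence $1=F_h(\lambda)$ has no root there, in any eigendirection, including $h=0$, where $F_0=\widehat m$ and $M_0<1$.

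\emph{Supercritical case.} Restrict to real $\lambda\ge0$ in the $h_{\max}$ direction. There $g(\lambda)=F_{h_{\max}}(\lambda)$ is continuous by dominated convergence, with $g(0)>1$. As $\lambda\to\infty$, $g(\lambda)\to0$ by dominated convergence, since $e^{-\lambda r}\to0$ for $r>0$ and the integrands are integrable. The intermediate value theorem then produces a positive root.

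For uniqueness, $g$ is nonincreasing, and it is strictly decreasing as soon as $m+\kappa h_{\max}q$ is not a.e.\ zero. This is the meaning of ``nontrivial kernel''; it holds automatically here, since $M_1>0$ is forced by $g(0)>1\ge M_0$, or else $M_0>1$. Strictness holds because $e^{-\lambda_1 r}<e^{-\lambda_2 r}$ for $r>0$ when $\lambda_1>\lambda_2$. So the root is unique on $(0,\infty)$.

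\emph{Equality case.} At $M_0+\kappa h_{\max}M_1=1$ we have $g(0)=1$, so the factor $1-F_{h_{\max}}(\lambda)$ vanishes at $\lambda=0$. This zero is distinct from the translational factor $\lambda$ removed in deriving \eqref{eq:char}.

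The main subtlety is the supercritical uniqueness claim. It requires a precise reading of ``nontrivial kernel'' and the verification that strict monotonicity really holds. Beyond that, I would only note that the derivation of \eqref{eq:char} itself is taken from the preceding modal reduction: insert $y=e^{\lambda t}v$, integrate $\int_0^\infty w(s)(1-e^{-\lambda s})\dd s=\lambda\widehat q(\lambda)$ by parts, and divide by $\lambda$.
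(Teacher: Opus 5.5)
Your proof is correct and follows the paper's own argument step for step. The steps are the Tonelli tail identity $\widehat q(0)=M_1$, the modulus bound $|\widehat m(\lambda)+\kappa h\widehat q(\lambda)|\le M_0+\kappa hM_1$ on $\Re\lambda\ge0$, and continuity, strict decrease and decay to zero along the real axis for the supercritical root. You also add the useful observation that nontriviality of the kernel is automatic once $M_0+\kappa h_{\max}M_1>1$. One small slip: $h\le h_{\max}$ holds by definition of $h_{\max}$, whereas $H\succeq0$ is what gives $h\ge0$, which is the nonnegativity of $m+\kappa hq$ your bound actually needs.
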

\begin{proof}
The tail identity $\int_0^\infty q(r)\dd r=M_1$ follows from Tonelli's theorem. If the mass is below one, then for $\Re\lambda\ge0$,
\[
 |\widehat m(\lambda)+\kappa h\widehat q(\lambda)|
 \le M_0+\kappa hM_1<1,
\]
so \eqref{eq:char} is impossible. Above one, its right side is continuous, strictly decreasing (for a nontrivial positive kernel) along real $\lambda\ge0$, starts above one, and tends to zero by dominated convergence; it therefore crosses one exactly once. Equality gives the zero-frequency condition. This positive-convolution spectral calculation provides the equilibrium reference for the subsequent nonlinear moving-orbit analysis.
\end{proof}

\begin{remark}[Gaussian curvature away from equilibrium]
For $\Phi(z)=e^{-\norm z^2/(2\sigma^2)}$,
\[
 -\nabla^2\Phi(z)=\Phi(z)
 \left[\frac{I}{\sigma^2}-\frac{zz^\top}{\sigma^4}\right].
\]
Its transverse eigenvalues are $\Phi(z)/\sigma^2$, whereas its radial eigenvalue is $\Phi(z)(1-\norm z^2/\sigma^2)/\sigma^2$. The radial contribution changes sign at $\norm z=\sigma$. A characteristic equation involving only $H=-\nabla^2\Phi(0)$ cannot record this change.
\end{remark}

\section{Geometric traveling-speed theory}
We next leave the equilibrium tangent problem. Throughout this section $\W=w$ is fixed, $\Phi$ is radial, and $-\nabla\Phi(z)=a(\norm z)z$ with $a(r)>0$. Let
\begin{equation}\label{eq:Fa}
 F_a(r)=\int_0^\infty sw(s)a(rs)\dd s,
 \qquad r\ge0,
\end{equation}
and put $M_0=\int_0^\infty m(s)\dd s$. The scalar function $F_a$ is the \emph{geometric traveling response}: it samples the entire radial imprint across all ages, not merely the Hessian at the origin. Its physical meaning is simple. On a straight path moving at speed $r$, an imprint of age $s$ lies a distance $rs$ behind the current position. The response $F_a(r)$ integrates the corresponding forward force over all such past imprints. Persistent straight motion is possible precisely when this contribution balances the part of the velocity that the ordinary velocity-memory term does not reproduce.

\begin{theorem}[Exact traveling-speed condition]\label{thm:travel}
A complete nonzero uniform translation $x(t)=x_0+vt$, $v\ne0$, solves \eqref{eq:main} if and only if
\begin{equation}\label{eq:speed}
 \boxed{1-M_0=\kappa F_a(\norm v).}
\end{equation}
Consequently, if $M_0=1$, $a>0$ and $w$ is nontrivial, no nonzero uniform translation exists. If $0\le M_0<1$, every positive solution $r$ of \eqref{eq:speed} supplies the sphere of translating velocities $\{v:\norm v=r\}$.
\end{theorem}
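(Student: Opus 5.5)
The plan is to substitute the translating history directly into \eqref{eq:main} and compute both sides exactly; no approximation is needed. For $x(t)=x_0+vt$ we have $\dot x\equiv v$ and $\Delta_sx(t)=sv$ for every $t$ and $s\ge0$. The velocity-memory term is then $\int_0^\infty m(s)v\dd s=M_0v$. Using the radial identity \eqref{eq:radiala} with fixed $\W=w$, the force becomes
\[
\kappa\G[x_t]=\kappa\int_0^\infty w(s)a(s\norm v)\,sv\dd s=\kappa F_a(\norm v)v.
\]
Hence \eqref{eq:main} reduces to the vector identity $v=M_0v+\kappa F_a(\norm v)v$, the same for all $t$. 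Since $v\ne0$, we may divide along the direction of $v$ and obtain the equivalent scalar condition \eqref{eq:speed}. Each step is reversible, which gives both directions of the equivalence. Alternatively, the same identity can be read off from Theorem~\ref{thm:factor} through the kernel \eqref{eq:radialkernel}, because $\int_0^\infty\int_r^\infty w(s)a(s\norm v)\dd s\dd r=F_a(\norm v)$ by Tonelli.

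The only technical point is to justify that the force integral is finite along an unbounded translating history; I expect this to be the main (mild) obstacle. Here I would use the standing remark that for fixed $w\in L^1$ and bounded $\nabla\Phi$ the force is defined for translating histories: $\norm{a(s r)sv}=\norm{\nabla\Phi(sv)}$ is bounded, so $w(s)\,s\,a(s\norm v)\norm v\le\norm{\nabla\Phi}_\infty w(s)$ is integrable. It follows that $F_a(r)<\infty$ for $r>0$, and no first-moment hypothesis is needed at nonzero speed.

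Both consequences are then immediate. If $M_0=1$, then \eqref{eq:speed} would require $F_a(\norm v)=0$. But $a>0$ and $w\ge0$ is nontrivial, so $s\,w(s)\,a(s\norm v)$ is positive on a set of positive measure and $F_a(\norm v)>0$; this is a contradiction, so no nonzero translation exists. If $M_0<1$ and $r>0$ solves \eqref{eq:speed}, the condition depends on $v$ only through $\norm v$. By the equivalence just proved, every $v$ with $\norm v=r$, together with any $x_0$, gives a translating solution, which yields the whole sphere $\{\norm v=r\}$.
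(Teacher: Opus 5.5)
Your proposal is correct and follows the paper's proof essentially step for step. You substitute $x(t)=x_0+vt$ and use $\Delta_sx=sv$ together with the radial identity to reduce \eqref{eq:main} to $(1-M_0-\kappa F_a(\norm v))v=0$; dividing by $v\ne0$ gives the speed condition, and positivity of $F_a$ gives the $M_0=1$ obstruction. Your extra check that the force integral converges along the unbounded translating history, using bounded $\nabla\Phi$ and $w\in L^1$, is a point the paper leaves implicit and is a worthwhile addition.
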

\begin{proof}
For $x(t)=x_0+vt$, $\dot x=v$ and $\Delta_sx=sv$. The velocity convolution equals $M_0v$. Radiality makes the geometric force
\begin{align*}
 -\kappa\int_0^\infty w(s)\nabla\Phi(sv)\dd s
 &=\kappa v\int_0^\infty sw(s)a(s\norm v)\dd s\\
 &=\kappa vF_a(\norm v).
\end{align*}
Thus the vector equation is equivalent, after dividing by $v\ne0$, to \eqref{eq:speed}. All converse substitutions are exact. Positivity of $F_a$ gives the $M_0=1$ obstruction.
\end{proof}

\begin{remark}[What ``self-propulsive'' means]
On a straight path $\nabla S$ points backward and the \emph{force} $-\kappa\nabla S$ points forward. The impossibility of a translating solution when $M_0=1$ is a balance obstruction: the normalized velocity memory already reproduces $v$, so adding any strictly positive propulsive feedback makes the equation inconsistent. It does not mean that the memory force is a brake or that nonstationary coherence follows automatically.
\end{remark}

\subsection{Higher spatial geometry determines branch direction}
The exact balance equation determines possible speeds, but the small-speed geometry already reveals \emph{on which side} of a critical gain a moving branch may emerge. Two imprints can have the same curvature at the origin and hence the same equilibrium characteristic equation, while their next spatial derivatives bend their nonzero-speed branches in opposite directions. This is the first explicit indication that local spectral equivalence leaves genuine nonlinear geometry undetermined.

A first nonlinear refinement of equilibrium spectral analysis follows from the small-$r$ expansion of $F_a$. Denote $M_j=\int_0^\infty s^jw(s)\dd s$. If $a(0)=a_0>0$, then the equilibrium threshold is $\kappa_c=(1-M_0)/(a_0M_1)$.

\begin{theorem}[Local geometric branch orientation]\label{thm:orientation}
Suppose $0\le M_0<1$, $a\in C^4([0,\infty))$ is bounded with bounded derivatives through order four, $a'(0)=a'''(0)=0$, and $M_5<\infty$. Write
\begin{equation}\label{eq:aexp}
 a(r)=a_0+c_2r^2+O(r^4),\qquad a_0>0.
\end{equation}
Then
\begin{align}
 F_a(r)&=a_0M_1+c_2M_3r^2+O(r^4),\label{eq:Fexp}\\
 \kappa(r):=\frac{1-M_0}{F_a(r)}
 &=\kappa_c\left[1-\frac{c_2M_3}{a_0M_1}r^2+O(r^4)\right].\label{eq:ke}
\end{align}
If $c_2>0$ (respectively $c_2<0$), sufficiently small nonzero translation branches occur on the $\kappa<\kappa_c$ (respectively $\kappa>\kappa_c$) side. The sign of $c_2$ is invisible to the equilibrium characteristic equation, which uses only $a_0$.
\end{theorem}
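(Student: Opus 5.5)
The plan is to expand $a$ by Taylor's theorem with integral remainder, insert this into the definition \eqref{eq:Fa} of $F_a$, and then invert the resulting expansion. This gives the critical gain curve $\kappa(r)$ through the exact balance law of Theorem~\ref{thm:travel}.

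\emph{Taylor expansion of $a$.} Since $a\in C^4([0,\infty))$ with $a'(0)=a'''(0)=0$, Taylor's theorem with integral remainder gives, for every $\rho\ge0$,
\[
 a(\rho)=a_0+c_2\rho^2+R(\rho),\qquad c_2=\tfrac12a''(0),
\]
where $|R(\rho)|\le\tfrac1{24}\norm{a^{(4)}}_\infty\rho^4$. This bound holds for all $\rho$, not only for small $\rho$, because $a^{(4)}$ is globally bounded. The global validity is what lets the remainder be integrated against the weight over all ages.

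\emph{Expansion of $F_a$.} Substitute $\rho=rs$ into \eqref{eq:Fa}. The first two terms integrate to $a_0M_1+c_2M_3r^2$; the moments are finite because $M_1,M_3\le M_0'+M_5$, with $M_0'=\int w<\infty$ and $s^j\le1+s^5$. The remainder satisfies
\[
 \Bigl|\int_0^\infty sw(s)R(rs)\dd s\Bigr|\le\tfrac1{24}\norm{a^{(4)}}_\infty M_5r^4 .
\]
This proves \eqref{eq:Fexp} with a uniform constant, and the hypothesis $M_5<\infty$ is exactly what makes this step work.

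\emph{Inversion for $\kappa(r)$.} By Theorem~\ref{thm:travel}, a translation with speed $r>0$ exists at gain $\kappa$ iff $\kappa=\kappa(r)=(1-M_0)/F_a(r)$. Here $F_a(r)>0$ because $a>0$, and $w$ is tacitly nontrivial since $a_0M_1>0$ is needed to define $\kappa_c$. Write $F_a(r)=a_0M_1(1+\epsilon(r))$ with
\[
 \epsilon(r)=\frac{c_2M_3}{a_0M_1}r^2+O(r^4).
\]
For small $r$ we have $|\epsilon|<1/2$, so $(1+\epsilon)^{-1}=1-\epsilon+O(\epsilon^2)=1-\epsilon+O(r^4)$. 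Multiplying by $(1-M_0)/(a_0M_1)=\kappa_c$ yields \eqref{eq:ke}.

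\emph{Sign of the branch.} Suppose $c_2\ne0$. Then $M_3>0$ (nontrivial $w$), so the $r^2$ coefficient is nonzero and dominates the $O(r^4)$ term for small $r>0$. Hence $\kappa(r)<\kappa_c$ when $c_2>0$, and $\kappa(r)>\kappa_c$ when $c_2<0$. Moreover $\kappa(r)\to\kappa_c$, so these small-speed branches emanate from the threshold on the stated side.

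\emph{Invisibility of $c_2$.} Equation \eqref{eq:char} and Proposition~\ref{prop:spec} involve only $H=-\nabla^2\Phi(0)$. Since $-\nabla\Phi(z)=a(\norm z)z$, this equals $a_0I$. Hence two imprints with the same $a_0$ but opposite $c_2$ share the same characteristic equation.

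The only delicate point is making the remainder bound global rather than merely asymptotic, so that it can be integrated against $sw(s)$. This is handled by the bounded fourth derivative and the finite fifth moment; the remaining steps are routine algebra.
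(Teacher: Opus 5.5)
Your proposal is correct and follows the paper's own argument: a Taylor remainder bounded uniformly by $\tfrac{1}{24}\norm{a^{(4)}}_\infty r^4s^4$ is integrated against $sw(s)$ using $M_5<\infty$, the reciprocal of $F_a$ is then expanded about $a_0M_1>0$, and the $r^2$ term is shown to dominate the $r^4$ term. You also supply details the paper leaves implicit: the explicit remainder constant, and the observation that $-\nabla^2\Phi(0)=a_0I$, which is exactly why the characteristic equation \eqref{eq:char} cannot detect the sign of $c_2$.
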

\begin{proof}
Taylor's theorem and bounded fourth derivative give a uniform remainder of order $r^4s^4$ in $a(rs)-a_0-c_2r^2s^2$. Multiplying by $sw(s)$ and integrating is justified by $M_5<\infty$, yielding \eqref{eq:Fexp}. Because $F_a(0)=a_0M_1>0$, expansion of its reciprocal yields \eqref{eq:ke}. If $c_2$ has a strict sign, the quadratic term dominates the fourth-order remainder for all sufficiently small $r>0$, proving the asserted branch side.
\end{proof}

For a radial potential smooth near the origin, $c_2$ encodes its next nontrivial spatial derivative beyond the Hessian. This gives the local orientation of the exact moving branch. For $n\ge2$, rotational invariance generates the expected continuous family of velocity directions.

\subsection{A concrete pair with the same Hessian}
Let
\begin{equation}\label{eq:pair}
 \Phi_1(z)=e^{-\norm z^2/2},\qquad
 \Phi_2(z)=(3+\norm z^2)e^{-\norm z^2/2}.
\end{equation}
Then $a_1(r)=e^{-r^2/2}$ and $a_2(r)=(1+r^2)e^{-r^2/2}$. Both imprints are strictly decreasing for $r>0$, and $-\nabla^2\Phi_i(0)=I$. Yet
\[
 a_1(r)=1-\tfrac12r^2+O(r^4),\qquad
 a_2(r)=1+\tfrac12r^2+O(r^4).
\]
For $m(s)=\rho e^{-s}$, $w(s)=e^{-s}$ and $\rho<1$, Theorem~\ref{thm:orientation} therefore places their small-speed branches on opposite sides of $\kappa_c=1-\rho$.

\begin{proposition}[Explicit global branch multiplicity]\label{prop:pair}
For \eqref{eq:pair} and the exponential kernels above, $F_1$ decreases strictly from one to zero. Therefore $\Phi_1$ has exactly one positive translating speed when $\kappa>\kappa_c$ and none when $\kappa\le\kappa_c$. Conversely, $F_2(0)=1$, $F_2(r)>1$ for sufficiently small $r>0$, and $F_2(r)\to0$ as $r\to\infty$. Hence, writing $F_{2,\max}=\max_{r\ge0}F_2(r)>1$ and $\kappa_{\rm fold}=\kappa_c/F_{2,\max}$, every $\kappa\in(\kappa_{\rm fold},\kappa_c)$ admits at least two distinct positive translating speeds for $\Phi_2$.
\end{proposition}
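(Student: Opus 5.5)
With $w(s)=e^{-s}$ and $m(s)=\rho e^{-s}$ we have $M_0=\rho$ and $M_1=1$, so $\kappa_c=1-\rho$. The plan is to compute $F_1$ and $F_2$ concretely from \eqref{eq:Fa} and then read off speeds from Theorem~\ref{thm:travel}. By that theorem, positive translating speeds are exactly the positive solutions of $F_i(r)=\kappa_c/\kappa$.

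For $\Phi_1$, $F_1(r)=\int_0^\infty s e^{-s}e^{-r^2s^2/2}\dd s$. At $r=0$ this gives $F_1(0)=M_1=1$. For $r>0$, differentiation under the integral (dominated by $s^3e^{-s}$) gives
\[
 F_1'(r)=-r\int_0^\infty s^3e^{-s}e^{-r^2s^2/2}\dd s<0,
\]
so $F_1$ is strictly decreasing. Dominated convergence gives $F_1(r)\to0$ as $r\to\infty$, since the integrand tends to $0$ pointwise for $s>0$ and is bounded by $se^{-s}$. Continuity and strict monotonicity make $F_1:[0,\infty)\to(0,1]$ a bijection.

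The equation $F_1(r)=\kappa_c/\kappa$ then has exactly one positive root when $\kappa_c/\kappa\in(0,1)$, that is, when $\kappa>\kappa_c$. It has none when $\kappa_c/\kappa\ge1$, because $F_1(r)<1$ for all $r>0$.

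For $\Phi_2$, $F_2(r)=\int_0^\infty se^{-s}(1+r^2s^2)e^{-r^2s^2/2}\dd s$, and again $F_2(0)=1$. The small-$r$ behaviour comes from Theorem~\ref{thm:orientation}. Its hypotheses hold because $a_2$ is smooth, bounded with bounded derivatives, and even (so $a_2'(0)=a_2'''(0)=0$), and $M_5=120<\infty$. With $c_2=\tfrac12$ and $M_3=6$ this gives $F_2(r)=1+3r^2+O(r^4)$, hence $F_2(r)>1$ for small $r>0$.

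For decay, note $(1+u^2)e^{-u^2/2}$ is bounded, so dominated convergence with majorant $Cse^{-s}$ yields $F_2(r)\to0$. The function $F_2$ is continuous, exceeds one near $0$, and tends to $0$ at infinity, so it attains a maximum $F_{2,\max}>1$ at some $r_\ast\in(0,\infty)$.

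Now fix $\kappa\in(\kappa_{\rm fold},\kappa_c)$ and set $c=\kappa_c/\kappa$. Then $1<c<F_{2,\max}$. By the intermediate value theorem, $F_2=c$ has a root in $(0,r_\ast)$, since $F_2(0)=1<c<F_2(r_\ast)$. It also has a root in $(r_\ast,\infty)$, since $F_2\to0<c$. These are two distinct positive translating speeds.

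The only delicate points are justifying the differentiation and limit interchanges and checking the hypotheses of Theorem~\ref{thm:orientation}; both are handled by the exponential factor $e^{-s}$, which gives finite moments of all orders. No closed form for $F_i$ is needed.
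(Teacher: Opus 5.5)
Your proof is correct and follows the paper's argument: strict monotonicity and dominated convergence for $F_1$, Theorem~\ref{thm:orientation} with $c_2=\tfrac12$ and $M_3=6$ giving $F_2(r)>1$ near zero, dominated convergence for the decay of $F_2$, and the intermediate value theorem on either side of a maximizer. The only cosmetic differences are that you obtain the monotonicity of $F_1$ from an explicit derivative rather than from the pointwise monotonicity of $a_1(rs)$, and that you spell out the ``exactly one / none'' root count for $\Phi_1$, which the paper leaves implicit.
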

\begin{proof}
The strictly decreasing radial factor $a_1(rs)$ makes $F_1$ strictly decreasing; dominated convergence gives its limit zero. For $a_2$, Theorem~\ref{thm:orientation} with $M_3=\int_0^\infty s^3e^{-s}\dd s=6$ gives
$F_2(r)=1+3r^2+O(r^4)>1$ near zero. Boundedness of $a_2$ and its decay to zero give $F_2(r)\to0$ by dominated convergence. The response is continuous and attains a maximum greater than one. For each horizontal level strictly between one and that maximum, the intermediate value theorem provides one crossing before a maximizing point and one after it. These crossings solve \eqref{eq:speed}; the two branches therefore follow directly from continuity of the exact traveling response.
\end{proof}

\section{Local-germ equivalence versus global geometry}
Agreement of Hessians still leaves infinitely many higher-order derivatives available to distinguish two imprints. We therefore ask a stronger question: could two fields exert \emph{exactly the same nonlinear force} for every sufficiently small displacement, and still permit different complete motions? The answer is affirmative within the spatial-memory class. We first establish the geometric construction for bounded-position histories, and later strengthen it to an open neighborhood in the \emph{same velocity-history phase space} used for the stability theory. The distinction between the two history topologies is important when temporal memory extends to arbitrarily large ages.

\begin{definition}[Force-germ equivalence]\label{def:germ}
Two imprints $\Phi_1,\Phi_2$ are \emph{force-germ equivalent at rest} if some $\delta>0$ satisfies
\[
 \nabla\Phi_1(z)=\nabla\Phi_2(z)
 \quad\text{for every }\norm z<\delta.
\]
For every fixed common weight $w$, their nonlinear forces then agree on any sup-norm neighborhood $\norm{h-h_\ast}_\infty<\delta/2$ of a constant history $h_\ast$. In particular, whenever their derivatives exist, all finite-order Fr\'echet derivatives at $h_\ast$ coincide.
\end{definition}

\begin{theorem}[Global orbit separation under identical local force germs]\label{thm:germ}
Let $n\ge1$, $m(s)=\rho e^{-s}$ with $0\le\rho<1$, and $w(s)=e^{-s}$. For every prescribed $\delta>0$ there exist two bounded, $C^\infty$, strictly radially decreasing imprints $\Phi_1,\Phi_2$ such that:
\begin{enumerate}
\item their gradients coincide for $\norm z<\delta$ and hence their entire nonlinear force operators agree on a uniform neighborhood of every constant history;
\item they have identical equilibrium linearizations and identical derivatives of every finite order at constant histories;
\item for an open interval of gains $0<\kappa<1-\rho$, the first system admits no nonzero uniform translation, while the second admits at least two distinct nonzero translating speeds.
\end{enumerate}
Both systems use the \emph{same} time kernels $m,w$. Therefore no equilibrium transfer function, finite-order expansion, or even complete local force germ at rest distinguishes their global translating orbit families.
\end{theorem}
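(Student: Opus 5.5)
The plan is to build both imprints from a common radial profile that is fixed inside the ball $\norm z<\delta$, and to alter only the tail of the second imprint. Items 1--2 then follow from Definition~\ref{def:germ} and Proposition~\ref{prop:blind}, and item 3 from Theorem~\ref{thm:travel}.

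\emph{Step 1: common interior.} Write $-\nabla\Phi_i(z)=a_i(\norm z)z$, so that $\Phi_i(z)=\phi_i(\norm z)$ with $\phi_i'(r)=-ra_i(r)$. Strict radial decrease is equivalent to $a_i>0$. Take
\[
a_1(r)=e^{-r^2/2},\qquad a_2=a_1+b,
\]
where $b\in C^\infty_c$, $b\ge0$, $b\not\equiv0$ and $\operatorname{supp}b\subset(\delta,R)$. Then $a_2>0$ and $a_2=a_1$ on $[0,\delta]$. Because $b$ vanishes near $0$, each $a_i$ is an even smooth function of $r$, hence $a_i(\norm z)$ is $C^\infty$ in $z$. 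Set $\phi_i(r)=-\int_0^r ua_i(u)\dd u$. This is bounded, since $\int_0^\infty u a_i(u)\dd u<\infty$, and strictly decreasing. By construction the gradients coincide on $\norm z<\delta$.

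\emph{Step 2: item 1 and item 2.} For a bounded-position history with $\norm{h-h_\ast}_\infty<\delta/2$, every displacement satisfies $\norm{h(0)-h(-s)}<\delta$. The integrands of \eqref{eq:geomforce} therefore agree pointwise for all $s$, so the force operators agree on that whole open set. Equal operators on an open set have equal Fréchet derivatives of every order at $h_\ast$. In particular Proposition~\ref{prop:blind} gives the common linearization with $H=-\nabla^2\Phi_i(0)=I$.

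\emph{Step 3: item 3.} With $w(s)=e^{-s}$, Theorem~\ref{thm:travel} reduces translations to $1-\rho=\kappa F_i(r)$ with $r>0$. As in Proposition~\ref{prop:pair}, $F_1$ decreases strictly from $1$ to $0$. Hence for every $\kappa<\kappa_c=1-\rho$ the first system has no nonzero translation. For the second system,
\[
F_2(r)=F_1(r)+\int_0^\infty se^{-s}b(rs)\dd s .
\]
The added integral is $0$ at $r=0$, vanishes as $r\to\infty$ by dominated convergence, and is continuous. The key point is to make $F_{2,\max}>1$. I would try a bump $b$ of large height $K$ on an interval $[\delta+1,\delta+2]$. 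For fixed $r_0>0$, the added term at $r_0$ equals $K$ times a fixed positive number, while $F_1(r_0)>0$. Choosing $K$ large gives $F_2(r_0)>1$. Alternatively, I would check whether a moderate height already suffices; in any case the bound $\kappa<1-\rho$ demanded by the statement needs $F_{2,\max}>1$. Then $F_2(0)=1$, $F_2(r_0)>1$ and $F_2(r)\to0$. For each level $c\in(1,F_{2,\max})$, the intermediate value theorem gives a crossing $F_2=c$ on $(0,r_\ast)$ and another on $(r_\ast,\infty)$, where $r_\ast$ is a maximizer. This yields two distinct speeds for every $\kappa\in(\kappa_c/F_{2,\max},\kappa_c)$, an open subinterval of $(0,1-\rho)$ on which the first system has no translation.

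The main obstacle is Step 3's inequality $F_{2,\max}>1$ together with keeping $\Phi_2$ bounded, smooth and strictly decreasing. The scaling argument handles it: any $K>0$ preserves positivity and boundedness, and $K$ only needs to be large. A subsidiary check is smoothness of $a_2(\norm z)z$ at the origin, which holds because $b$ is supported away from $0$.
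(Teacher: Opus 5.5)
Your proposal is correct and follows essentially the same route as the paper: a Gaussian base profile $a_1$ plus a large multiple of a nonnegative bump supported outside the $\delta$-ball, exact force equality on $\delta/2$ sup-norm neighborhoods of constant histories, and the crossing argument via Theorem~\ref{thm:travel} (strictly decreasing $F_1<1$ versus $F_2(0)=1<F_2(r_0)$, $F_2\to0$). The only cosmetic difference is that you normalize $\Phi_i(0)=0$ rather than $\Phi_i(z)\to0$ as $\norm z\to\infty$, which affects nothing in the argument.
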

\begin{proof}
Choose any nonnegative nonzero $b\in C_c^\infty((\delta,2\delta))$. Define
\[
 a_1(r)=e^{-r^2/2},\qquad a_2(r)=a_1(r)+Cb(r),
 \qquad C>0.
\]
Both functions are smooth, positive, bounded, and tend to zero as $r\to\infty$. Define their radial potentials by
\begin{equation}\label{eq:construct}
 \Phi_i(z)=\int_{\norm z}^\infty r a_i(r)\dd r,
 \quad i=1,2.
\end{equation}
The integrals converge because $b$ has compact support and $a_1$ decays rapidly. Since $a_1(r)=e^{-r^2/2}$, $\Phi_1(z)=e^{-\norm z^2/2}$. The perturbation of $\Phi_1$ is a smooth radial function that is constant when $\norm z\le\delta$ and zero when $\norm z\ge2\delta$; hence $\Phi_2$ is smooth at the origin as well. Direct differentiation gives
\[
 -\nabla\Phi_i(z)=a_i(\norm z)z,
\]
so both imprints decrease strictly for every positive radius. Their gradients agree whenever $\norm z<\delta$.

If $\norm{h-h_\ast}_\infty<\delta/2$, then $\norm{h(0)-h(-s)}<\delta$ for all $s$, proving that their integrands and therefore their forces are \emph{exactly equal} on this history neighborhood. All local derivatives agree as a direct consequence; no Taylor approximation is involved.

The first response $F_1(r)=\int_0^\infty se^{-s}a_1(rs)\dd s$ decreases strictly from $1$ to $0$, so it has no positive translating root when $\kappa<1-\rho$. For the second,
\begin{equation}\label{eq:responsebump}
 F_2(r)=F_1(r)+C B(r),\quad
 B(r)=\int_0^\infty se^{-s}b(rs)\dd s.
\end{equation}
Fix any $r_0>0$. Since $b\ge0$ is nonzero and $e^{-s}>0$, we have $B(r_0)>0$. Choose $C$ large enough that $F_2(r_0)>1$. Also $F_2(0)=1$ because $b(0)=0$, and $F_2(r)\to0$ as $r\to\infty$ by dominated convergence (both $a_i$ are bounded and decay). Thus $F_{2,\max}>1$. For every
\[
 \kappa\in\left(\frac{1-\rho}{F_{2,\max}},\,1-\rho\right)
\]
the speed level $(1-\rho)/\kappa$ lies strictly between one and the maximum. Continuity yields at least one positive crossing on either side of a maximizer. Theorem~\ref{thm:travel} turns both into exact nonzero uniform translations. The first system has none there. This proves all three assertions.
\end{proof}

\begin{remark}[Why this is stronger than identical spectra]
The construction gives exact local agreement of the force in bounded-position histories. The subsequent velocity-history formulation makes the stationary and translating solutions elements of one Banach space and provides the strengthened stability classification. The potentials may differ by an additive constant near the origin; their gradients agree there exactly.
\end{remark}

\section{Variational instability of translating branches}
Finding a nonzero root of the traveling-speed equation proves that a straight motion \emph{exists}; it does not establish whether a nearby motion stays close. The next theorem addresses this second question directly along the complete moving trajectory. Differentiation of the force at the nonzero displacements $rs$ produces the longitudinal factor $b_r(s)$, and the slope of the geometric response furnishes a sign test for a growing variational mode. This connects the geometry of the imprint to the \emph{moving-orbit} spectrum, which need not resemble the equilibrium spectrum.

\begin{theorem}[Positive longitudinal exponent on rising branches]\label{thm:travelunstable}
Assume the fixed-kernel radial model of Theorem~\ref{thm:travel}, with $0\le M_0<1$, $m,w\ge0$ integrable, $sw(s)\in L^1$, and $a\in C^1([0,\infty))$ such that $a$ and $ra'(r)$ are bounded. Let $r>0$ solve \eqref{eq:speed} and suppose $F_a$ is differentiable there with
\begin{equation}\label{eq:slopepos}
 F_a'(r)>0.
\end{equation}
Then the linear variational equation along any corresponding uniform translation has at least one \emph{positive real longitudinal characteristic exponent}. In particular, the small-speed subcritical branch associated with $\Phi_2$ in \eqref{eq:pair} possesses a growing longitudinal variational mode.
\end{theorem}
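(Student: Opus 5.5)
We linearize the velocity equation \eqref{eq:abstract_velocity} about the constant history $v_0=re$, $\norm e=1$, using the derivative formula \eqref{eq:derivmoving} of Proposition~\ref{prop:commonphase}, and then restrict to longitudinal perturbations $z(\theta)=\zeta(\theta)e$. For $f(z)=a(\norm z)z$ one has $Df(z)=a(\norm z)I+a'(\norm z)\norm z\,\hat z\hat z^\top$ for $z\neq0$. At $z=sv_0=rse$ this gives $Df(sv_0)e=b_r(s)e$ with
\[
 b_r(s)=a(rs)+rs\,a'(rs).
\]
This factor is bounded because $a$ and $ra'(r)$ are bounded, so the domination in Proposition~\ref{prop:commonphase} holds with $sw(s)\in L^1$. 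The longitudinal variational equation is therefore the scalar equation
\[
 \zeta(t)=\int_0^\infty m(s)\zeta(t-s)\dd s+\kappa\int_0^\infty w(s)b_r(s)\int_0^s\zeta(t-u)\dd u\dd s .
\]
Transverse directions give $a(rs)$ in place of $b_r(s)$ and are not needed.

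Next I insert the mode $\zeta(t)=e^{\lambda t}$ with real $\lambda>0$. Since $\int_0^s e^{-\lambda u}\dd u=(1-e^{-\lambda s})/\lambda$, the characteristic function is
\[
 \chi(\lambda)=\widehat m(\lambda)+\frac{\kappa}{\lambda}\int_0^\infty w(s)b_r(s)\bigl(1-e^{-\lambda s}\bigr)\dd s,
\]
and a positive exponent is a root of $\chi(\lambda)=1$ on $(0,\infty)$. The plan is an intermediate value argument on $(0,\infty)$.

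At infinity, $\widehat m(\lambda)\to0$ by dominated convergence. The second term is bounded by $\kappa\norm{b_r}_\infty\norm w_1/\lambda\to0$. Hence $\chi(\lambda)\to0<1$.

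At zero, $(1-e^{-\lambda s})/\lambda\to s$ with the bound $\le s$, so dominated convergence gives
\[
 \chi(0^+)=M_0+\kappa\int_0^\infty s\,w(s)b_r(s)\dd s .
\]
The key identity is
\[
 \int_0^\infty sw(s)b_r(s)\dd s=F_a(r)+rF_a'(r).
\]
It follows from $\frac{\dd}{\dd r}\bigl[r\,a(rs)\bigr]=a(rs)+rs\,a'(rs)$. Consequently $\int sw(s)b_r(s)\dd s=\frac{\dd}{\dd r}\bigl[rF_a(r)\bigr]$, provided differentiation under the integral sign is legitimate. It is, because $|s^2w(s)a'(rs)|=|sw(s)\cdot (rs)a'(rs)|/r$ is dominated by a multiple of $sw(s)$ locally uniformly in $r>0$. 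Using the balance law \eqref{eq:speed}, $\kappa F_a(r)=1-M_0$, we get
\[
 \chi(0^+)=M_0+(1-M_0)+\kappa rF_a'(r)=1+\kappa rF_a'(r)>1
\]
by \eqref{eq:slopepos}. Continuity of $\chi$ on $(0,\infty)$ then yields a root $\lambda_*>0$, and $e^{\lambda_*t}e$ is a growing longitudinal solution of the variational equation. I expect this step, the identification of $\chi(0^+)$ with $1+\kappa rF_a'(r)$ together with its justification, to be the main obstacle. In particular, the derivative of $F_a$ in the hypothesis must be matched with the differentiated integral, and the domination argument above is designed to ensure this.

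For the $\Phi_2$ claim, Proposition~\ref{prop:pair} and Theorem~\ref{thm:orientation} give $F_2(r)=1+3r^2+O(r^4)$, so $F_2'(r)=6r+O(r^3)>0$ for small $r>0$. The differentiated expansion follows from the same Taylor argument applied to $a_2'$. The small-speed root for $\kappa$ slightly below $\kappa_c$ lies on this rising part before the maximizer, so \eqref{eq:slopepos} holds there. The hypotheses on $a_2$ are satisfied because $a_2$ and $ra_2'(r)$ are bounded Gaussian-type functions.
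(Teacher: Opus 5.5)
Your proof is correct and follows the paper's argument: the same longitudinal multiplier $b_r(s)=a(rs)+rs\,a'(rs)$, the same characteristic function (your $\chi$ equals $1-D_r$, written in velocity rather than position variables), the identity $\int_0^\infty s\,w(s)b_r(s)\,ds=F_a(r)+rF_a'(r)$ combined with the balance law, and the intermediate value theorem between $\lambda\to0^+$ and $\lambda\to\infty$. Two of your steps are somewhat more careful than the paper's: the explicit domination that justifies differentiating $F_a$, and the differentiated Taylor estimate $F_2'(r)=6r+O(r^3)$, whereas the paper infers $F_2'>0$ directly from the undifferentiated expansion.
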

\begin{proof}
	Write $v=re$, $\norm e=1$, and perturb the complete
	translating orbit $x(t)=x_0+vt$ by $y(t)=\xi(t)e$.
	Differentiating the radial force map
	$z\mapsto a(\norm z)z$ at $z=sv$ gives the longitudinal
	scalar multiplier
	\[
	b_r(s)=a(rs)+rs a'(rs).
	\]
	Consequently, the longitudinal variational equation is
	\begin{align}\label{eq:longvar}
		\dot\xi(t)={}&
		\int_0^\infty m(s)\dot\xi(t-s)\dd s\notag\\
		&+\kappa\int_0^\infty w(s)b_r(s)
		[\xi(t)-\xi(t-s)]\dd s.
	\end{align}
	This equation has constant coefficients because the
	reference increments $\Delta_sx(t)=sv$ are independent
	of time.
	
	To examine longitudinal instability, consider an
	exponential perturbation $\xi(t)=e^{\lambda t}$
	with $\lambda>0$. Substitution into
	\eqref{eq:longvar} and division by $\lambda$
	lead to the characteristic function
	\begin{align*}
		D_r(\lambda):={}&1-\widehat m(\lambda)\\[-1pt]
		&-\kappa\int_0^\infty w(s)b_r(s)
		\frac{1-e^{-\lambda s}}{\lambda}\dd s,
	\end{align*}
	where
	\[
	\widehat m(\lambda)
	=\int_0^\infty m(s)e^{-\lambda s}\dd s.
	\]
	The corresponding characteristic equation is
	\begin{equation}\label{eq:Dlambda}
		D_r(\lambda)=0.
	\end{equation}
	
	We now prove that \eqref{eq:Dlambda} possesses
	at least one positive real root.
	
	First, the boundedness assumptions and
	$sw\in L^1(0,\infty)$ ensure that $D_r$ extends
	continuously to $[0,\infty)$, with
	\[
	\left.
	\frac{1-e^{-\lambda s}}{\lambda}
	\right|_{\lambda=0}=s.
	\]
	Differentiation under the integral defining
	$F_a$ is justified by the same assumptions
	and yields
	\[
	\int_0^\infty sw(s)b_r(s)\dd s
	=F_a(r)+rF_a'(r).
	\]
	Using the exact traveling-branch balance
	$1-M_0=\kappa F_a(r)$, we therefore obtain
	\begin{equation}\label{eq:D0}
		\begin{aligned}
			D_r(0)
			&=1-M_0-\kappa[F_a(r)+rF_a'(r)]\\
			&=-\kappa rF_a'(r)<0,
		\end{aligned}
	\end{equation}
	where the strict inequality follows from
	\eqref{eq:slopepos}.
	
	Next, since $m\in L^1(0,\infty)$,
	\[
	\lim_{\lambda\to+\infty}
	\widehat m(\lambda)=0.
	\]
	Moreover, for every $\lambda>0$ and $s\geq0$,
	\[
	0\leq
	\frac{1-e^{-\lambda s}}{\lambda}
	\leq s.
	\]
	The quotient tends to zero as
	$\lambda\to+\infty$ for each fixed $s$.
	Since $sw(s)|b_r(s)|$ is integrable,
	the dominated convergence theorem gives
	\[
	\lim_{\lambda\to+\infty}
	\int_0^\infty w(s)b_r(s)
	\frac{1-e^{-\lambda s}}{\lambda}\dd s=0.
	\]
	Consequently,
	\[
	\lim_{\lambda\to+\infty}D_r(\lambda)=1>0.
	\]
	
	Thus $D_r$ is continuous on $[0,\infty)$,
	satisfies $D_r(0)<0$, and tends to a strictly
	positive limit as $\lambda\to+\infty$.
	By the intermediate value theorem,
	there exists $\lambda_+>0$ such that
	\[
	D_r(\lambda_+)=0.
	\]
	The corresponding perturbation
	$\xi(t)=e^{\lambda_+t}$ is a growing
	longitudinal mode of the translating solution.
	
	Finally, for $\Phi_2$ in \eqref{eq:pair},
	the expansion
	\[
	F_2(r)=1+3r^2+O(r^4)
	\]
	implies $F_2'(r)>0$ for all sufficiently
	small $r>0$. The associated moving branch
	therefore satisfies \eqref{eq:slopepos}
	and possesses a positive real longitudinal
	characteristic exponent. Moreover, its
	gain satisfies
	\[
	\kappa(r)=\frac{1-\rho}{F_2(r)}<1-\rho
	\]
	for sufficiently small $r>0$, confirming
	that this unstable branch occurs in the
	subcritical gain regime.
\end{proof}

\begin{corollary}[Fold degeneracy]\label{cor:fold}
At a differentiable turning point $r_\ast>0$ of the speed response with $F_a'(r_\ast)=0$, the nontranslational longitudinal characteristic factor in \eqref{eq:Dlambda} satisfies $D_{r_\ast}(0)=0$. The additional zero-frequency degeneracy is a necessary spectral signature of such a fold, but does not by itself determine the stability of the descending branch.
\end{corollary}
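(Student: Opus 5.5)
The plan is to read the statement directly off the computation \eqref{eq:D0} in the proof of Theorem~\ref{thm:travelunstable}. That computation did not use the sign of $F_a'(r)$ until its last inequality. Fix a positive speed $r_\ast$ solving \eqref{eq:speed}, so that $1-M_0=\kappa F_a(r_\ast)$, and assume $F_a$ is differentiable at $r_\ast$ with $F_a'(r_\ast)=0$. I will work under the same standing hypotheses as Theorem~\ref{thm:travelunstable}: $a\in C^1$ with $a$ and $ra'(r)$ bounded, and $sw\in L^1$.

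\emph{Step 1.} I will show that $D_{r_\ast}$ extends continuously to $\lambda=0$. The kernel $(1-e^{-\lambda s})/\lambda$ is dominated by $s$ and tends to $s$ as $\lambda\to0^+$. Since $sw(s)|b_{r_\ast}(s)|$ is integrable, dominated convergence gives
\[
D_{r_\ast}(0)=1-M_0-\kappa\int_0^\infty sw(s)b_{r_\ast}(s)\dd s .
\]

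\emph{Step 2.} I will identify this integral with the derivative of $rF_a(r)$. We have $rF_a(r)=\int_0^\infty sw(s)\,ra(rs)\dd s$, and $\partial_r[ra(rs)]=a(rs)+rsa'(rs)=b_r(s)$. This derivative is bounded by $\norm a_\infty+\sup_u|ua'(u)|$, so differentiation under the integral is justified. It gives $\int sw\,b_{r_\ast}=F_a(r_\ast)+r_\ast F_a'(r_\ast)$, which is exactly the identity used in \eqref{eq:D0}.

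\emph{Step 3.} Substituting into Step 1 gives $D_{r_\ast}(0)=1-M_0-\kappa F_a(r_\ast)-\kappa r_\ast F_a'(r_\ast)$. The balance law removes the first three terms, and the fold condition removes the last, so $D_{r_\ast}(0)=0$. The factor $\lambda$ removed when forming $D_r$ is the translation mode. A zero of $D_{r_\ast}$ at $\lambda=0$ is therefore an \emph{additional} zero-frequency root, i.e.\ a double zero of the undivided symbol $\lambda D_{r_\ast}(\lambda)$.

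\emph{Step 4.} For the interpretive clause, I will note that $D_{r_\ast}(0)=0$ only fixes the value of $D_{r_\ast}$ at the origin. It says nothing about roots with $\Re\lambda>0$ at nearby speeds on the descending side, where $D_r(0)=-\kappa rF_a'(r)>0$. On that side the intermediate value argument of Theorem~\ref{thm:travelunstable} no longer produces a positive real root. Complex roots with positive real part are not excluded either. Hence stability is left undetermined, and the necessity statement is just the contrapositive of Step 3 read at fold points.

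The only delicate point is Step 2. Its justification is the uniform domination $|b_r(s)|\le C$ together with $sw\in L^1$, and I expect that to be the sole obstacle.
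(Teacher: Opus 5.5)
Your proposal is correct and follows the paper's proof. The paper simply notes that \eqref{eq:D0} holds without the strict inequality and substitutes $F_a'(r_\ast)=0$ together with the branch balance, which is your Step~3; your Steps~1--2 spell out the continuity at $\lambda=0$ and the identity $\int_0^\infty sw(s)b_r(s)\dd s=F_a(r)+rF_a'(r)$ that the paper takes from Theorem~\ref{thm:travelunstable}, and your Step~4 matches the paper's reason for the limitation (a single scalar equality at $\lambda=0$ gives no sign information on the complex spectrum), except that the necessity claim is Step~3 itself rather than its contrapositive, though the two are equivalent.
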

\begin{proof}
Equation \eqref{eq:D0} holds without the strict inequality. Substituting $F_a'(r_\ast)=0$ yields $D_{r_\ast}(0)=0$. The limitation in the conclusion follows because no sign determination for the complete complex spectrum follows from this single scalar equality.
\end{proof}

\begin{remark}[Transverse modes and symmetry]
For $n\ge2$, transverse perturbations have multiplier $a(rs)$ in place of $b_r(s)$. Their divided characteristic factor has zero-frequency value $1-M_0-\kappa F_a(r)=0$ on a translating branch, consistent with rotational degeneracy of the velocity sphere. For $\Re\lambda>0$, positivity and the branch balance yield
\[
 \left|\widehat m(\lambda)+\kappa\int_0^\infty w(s)a(rs)
 \frac{1-e^{-\lambda s}}{\lambda}\dd s\right|<1,
\]
under the nondegenerate positive-kernel assumptions. Thus no transverse characteristic exponent with positive real part arises from this scalar factor. This does not imply orbital asymptotic stability: the neutral translation and rotation modes must be treated on an appropriate quotient phase space.
\end{remark}

\begin{remark}[Limits of the instability assertion]
A positive real exponent characterizes the moving-orbit linearization. Nonlinear orbital instability can be investigated through differentiable evolution maps on fading-history spaces; the present moving-branch classification also provides a direct nonlinear attraction theorem for the descending one-dimensional branch.
\end{remark}

\section{Moving-branch stability in the velocity phase space}\label{sec:branchstab}
The existence of a positive variational exponent explains instability on certain increasing-response branches. To describe the other side of the picture, we need a genuinely nonlinear estimate: does a small perturbation of a moving speed decay under the \emph{complete} hereditary equation? The next criterion answers this question in one dimension by controlling the derivative of the spatial force along the entire translating history. The contraction is imposed on the unweighted initial-history ball; the resulting trajectories converge in fading-memory norms.

We now connect the traveling-speed response to nonlinear stability of the corresponding velocity history. For a scalar radial interaction put
\[
 g(d)=a(|d|)d,\qquad
 b_r(s)=g'(rs)=a(rs)+rs a'(rs).
\]
The directional derivative $b_r$ samples the full radial force law along the complete moving history. Define
\begin{equation}\label{eq:Br}
 B_r=\int_0^\infty sw(s)|b_r(s)|\dd s.
\end{equation}

\begin{theorem}[A two-sided moving-branch criterion]\label{thm:twosided}
Let $m,w\ge0$ have finite first memory moment and let $a\in C^1([0,\infty))$ be positive with $a$ and $ua'(u)$ bounded. Consider an exact translating speed $r>0$ satisfying $1-M_0=\kappa F_a(r)$.
\begin{enumerate}
\item If $F_a'(r)>0$, the longitudinal variation admits a positive real characteristic exponent.
\item If $M_0+\kappa B_r<1$, the longitudinal characteristic equation has no roots in $\Re\lambda\ge0$. If in addition $m$ and $sw$ have positive exponential moments and $g'$ is bounded and continuous, the scalar velocity equation has a locally exponentially attracting constant-speed solution $v(t)\equiv r$: for every compatible prehistory $v_-\in\mathcal V$ satisfying $\norm{v_--r}_\infty<\varepsilon$ for a sufficiently small $\varepsilon>0$, there exist $C,\gamma>0$ such that
\begin{equation}\label{eq:nonlinearstable}
 |v(t)-r|\le C e^{-\gamma t},\qquad
 x(t)-rt\longrightarrow x_\infty.
\end{equation}
The associated full-history perturbation converges to zero in every fading supremum norm $\norm z_\chi=\sup_{s\ge0}e^{-\chi s}|z(-s)|$, $\chi>0$. Thus the initial perturbation is controlled in the \emph{unweighted} supremum norm, while convergence of the resulting histories is established in \emph{fading-memory} norms.
\end{enumerate}
\end{theorem}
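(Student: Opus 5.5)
Part 1 is Theorem~\ref{thm:travelunstable} restated: under the present hypotheses ($a$ and $ua'(u)$ bounded, $sw\in L^1$) the characteristic function $D_r$ is continuous on $[0,\infty)$. From \eqref{eq:D0} we get $D_r(0)=-\kappa rF_a'(r)<0$, and $D_r(\lambda)\to1$ as $\lambda\to\infty$. So I will simply invoke that theorem.

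For the spectral half of part 2, I will take $\Re\lambda\ge0$, $\lambda\ne0$, and bound the nontranslational factor directly. We have $|e^{-\lambda s}|\le1$, and
\[
 \Bigl|\frac{1-e^{-\lambda s}}{\lambda}\Bigr|=\Bigl|\int_0^s e^{-\lambda u}\dd u\Bigr|\le s .
\]
This gives $|\widehat m(\lambda)+\kappa\int w b_r(1-e^{-\lambda s})/\lambda\dd s|\le M_0+\kappa B_r<1$, so $D_r(\lambda)\ne0$. At $\lambda=0$ the same bound gives $|D_r(0)-1|\le M_0+\kappa B_r<1$. This is the argument of Proposition~\ref{prop:spec} with $h q$ replaced by the signed multiplier $b_r$.

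For the nonlinear attraction, I will write $v=r+z$ with $z$ the scalar perturbation history. The key identity comes from the balance law $1-M_0=\kappa F_a(r)$: the constant history $r$ is a fixed point of $\mathfrak T$, and $\mathsf d_s v=rs+\mathsf d_s z$. Hence
\[
 z(t)=\int_0^\infty m(s)z(t-s)\dd s+\kappa\int_0^\infty w(s)\bigl[g(rs+\mathsf d_sz_t)-g(rs)\bigr]\dd s .
\]
By the mean value theorem, the bracket equals $\beta(s,t)\,\mathsf d_s z_t$ with $\beta=\int_0^1 g'(rs+\theta\mathsf d_sz_t)\dd\theta$. I will then repeat the Gr\"onwall-free bootstrap of Theorem~\ref{thm:arrest}, with $B_T=\max\{\norm{z_-}_\infty,\sup_{[0,T]}e^{\gamma t}|z(t)|\}$. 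This requires a weighted gain
\[
 Q_\gamma=\int e^{\gamma s}m\dd s+\kappa\sup_{t}\int_0^\infty w(s)|\beta(s,t)|\frac{e^{\gamma s}-1}{\gamma}\dd s<1 .
\]

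The main obstacle is controlling $|\beta(s,t)|$ by $|b_r(s)|$ uniformly. The difficulty is that $\mathsf d_s z$ grows like $s\norm z_\infty$, so for large ages the argument $rs+\mathsf d_sz$ is not close to $rs$. I would handle it by splitting the $s$-integral at a cutoff $S$. On $[S,\infty)$, I use boundedness of $g'$ together with the positive exponential moment of $sw$: for small $\gamma$ this makes $\kappa\norm{g'}_\infty\int_S^\infty w(s)(e^{\gamma s}-1)/\gamma\dd s$ as small as desired. On $[0,S]$, the perturbation satisfies $|\mathsf d_s z|\le S\,C\varepsilon$ under the bootstrap bound $|z|\le B_T\le C\varepsilon$. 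Uniform continuity of $g'$ on compacts then gives $|\beta-b_r|\le\eta$. Since $(e^{\gamma s}-1)/\gamma\to s$ and $\int e^{\gamma s}m\to M_0$ as $\gamma\to0$, the strict inequality $M_0+\kappa B_r<1$ survives: first choose $S$ and $\gamma$, then $\varepsilon$. The bootstrap closes because $B_T\le\norm{z_-}_\infty<\varepsilon$ keeps all perturbations in the ball where the estimate holds. Existence of the future solution comes from the local contraction of Proposition~\ref{prop:exist} applied near $r$ (using the Lipschitz bound of Proposition~\ref{prop:commonphase}), continued by the a priori bound.

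The conclusion then gives $|v(t)-r|\le\varepsilon e^{-\gamma t}$, and integration yields $x(t)-rt\to x_\infty$. For the fading norms, fix $\chi>0$ and evaluate $\sup_{s}e^{-\chi s}|z(t-s)|$ by splitting at $s=t/2$. For $s\le t/2$ the bound is $\varepsilon e^{-\gamma t/2}$. For $s>t/2$ it is at most $\varepsilon e^{-\chi t/2}$. Both tend to zero.
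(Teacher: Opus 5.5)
Your proposal is correct and follows the paper's proof essentially step for step. You cite Theorem~\ref{thm:travelunstable} for part 1, use the bound $|(1-e^{-\lambda s})/\lambda|\le s$ for the spectral claim, reduce via the mean value theorem to the weighted comparison of Theorem~\ref{thm:arrest}, and split the fading norm.

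The only real difference is how the large-age drift of $\mathsf d_s z$ is controlled. The paper uses the cone supremum $\sup_{|d-rs|\le s\varepsilon}|g'(d)|$ with dominated convergence as $\varepsilon\downarrow0$, fixing $\varepsilon$ before $\gamma$, whereas you carry out the same limit by hand with a cutoff $S$. The paper also takes the contraction constant on the $\varepsilon$-ball to be this local gain $q_\varepsilon(0)<1$. Your existence step needs that local gain too (or a short-time stepping argument), because the global constant from Proposition~\ref{prop:commonphase} need not be below one.
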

\begin{proof}
The first assertion is Theorem~\ref{thm:travelunstable}. In the longitudinal variation the exponential velocity mode satisfies $D_r(\lambda)=0$ from \eqref{eq:Dlambda}. For $\Re\lambda\ge0$ the elementary integral identity
\[
 \frac{1-e^{-\lambda s}}{\lambda}=\int_0^s e^{-\lambda u}\dd u
\]
gives $|(1-e^{-\lambda s})/\lambda|\le s$ for $\lambda\ne0$, with its continuous value $s$ at zero. Consequently, the modulus of the complete complex quantity satisfies
\begin{equation}\label{eq:spectralmodulus}
\begin{aligned}
 &\left|\widehat m(\lambda)+\kappa\int_0^\infty
 w(s)b_r(s)\frac{1-e^{-\lambda s}}{\lambda}\dd s\right|\\
 &\hspace{1.5cm}\le M_0+\kappa B_r<1.
\end{aligned}
\end{equation}
Thus the nontranslational characteristic factor has no root in the closed right half-plane.

For the nonlinear conclusion work in the common phase space $\mathcal V$ of Section~\ref{sec:phase}. Let $\bar v(\theta)\equiv r$ and choose $\varepsilon>0$. For a history in the $\varepsilon$-ball about $\bar v$, the mean value theorem gives the local Lipschitz bound
\begin{align*}
 &|\mathfrak T_\Phi(v)-\mathfrak T_\Phi(u)|
 \le q_\varepsilon(0)\norm{v-u}_\infty,\\
 &q_\varepsilon(0)=M_0+\kappa\int_0^\infty sw(s)
 \sup_{|d-rs|\le s\varepsilon}|g'(d)|\dd s.
\end{align*}
Boundedness and continuity of $g'$ and dominated convergence imply $q_\varepsilon(0)\to M_0+\kappa B_r<1$ as $\varepsilon\downarrow0$. Fix $\varepsilon$ so that $q_\varepsilon(0)<1$. For a prescribed compatible past with $\norm{v_--r}_\infty<\varepsilon$, the fixed-point map on the closed $\varepsilon$-ball of bounded future velocities satisfying $v(0)=v_-(0)$ is invariant because $\mathfrak T_\Phi(\bar v)=r$, and is a contraction with constant $q_\varepsilon(0)$. Thus the solution exists uniquely within that ball and stays there.

By the positive exponential-moment assumption, continuity at $\gamma=0$ allows a sufficiently small $\gamma>0$ such that
\begin{align*}
 q_\varepsilon(\gamma)={}&\int_0^\infty m(s)e^{\gamma s}\dd s\\
 &+\kappa\int_0^\infty w(s)
 \sup_{|d-rs|\le s\varepsilon}|g'(d)|
 \frac{e^{\gamma s}-1}{\gamma}\dd s<1.
\end{align*}
The difference $z(t)=v(t)-r$ then satisfies the same weighted comparison inequality as in Theorem~\ref{thm:arrest}, with coefficient $q_\varepsilon(\gamma)$. Writing $Z_-=\sup_{t\le0}|z(t)|$ and taking successive suprema on $[0,T]$ yields $|z(t)|\le Z_-e^{-\gamma t}$. Hence $\int_0^\infty |z(t)|\dd t<\infty$ and $x(t)-rt$ has a finite limit. Finally, splitting a fading-memory history into $s\le t$ and $s>t$ gives
\[
 \norm{z_t}_\chi\le Z_-e^{-\min\{\chi,\gamma\}t},
\]
with the harmless equality case understood directly. This establishes the claimed attraction on the reduced velocity phase space.
\end{proof}

\begin{remark}[Phase-space topology for moving-branch attraction]\label{rem:topology}
For infinite prehistory, convergence of the current velocity does not imply exponential decay in the unweighted supremum norm on $\mathcal V$. A perturbation supported sufficiently far in the past remains present in that norm for every finite time. Theorem~\ref{thm:twosided} requires small compatible initial perturbations in the unweighted norm $\norm{v_--r}_\infty$. It then proves exponential decay of the current speed and convergence of the complete history in weighted fading-memory norms. Smallness solely in a fading-memory norm is not among the theorem's hypotheses: such a weighted ball can contain large remote-past deviations.
\end{remark}

\begin{proposition}[Transverse spectrum and velocity-direction symmetry]\label{prop:transverse}
Suppose $n\ge2$ and $m,w\ge0$, with $F_a(r)>0$ on a translating branch. For each transverse perturbation, the divided characteristic factor is
\begin{equation}\label{eq:transverseD}
 D_\perp(\lambda)=1-\widehat m(\lambda)-\kappa
 \int_0^\infty w(s)a(rs)
 \frac{1-e^{-\lambda s}}{\lambda}\dd s.
\end{equation}
Then $D_\perp(0)=0$, while $D_\perp(\lambda)\ne0$ for $\Re\lambda\ge0$, $\lambda\ne0$. Thus the transverse velocity spectrum contains the neutral rotation mode and no exponent in the open right half-plane.
\end{proposition}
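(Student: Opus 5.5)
We will first derive the transverse variational equation and then show that the transverse characteristic factor $D_\perp$ can vanish only at $\lambda=0$ in the closed right half-plane, using the branch balance.

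\emph{Transverse equation and $D_\perp(0)=0$.} Perturb the translation $x(t)=x_0+vt$, $v=re$, by $y(t)=\xi(t)e_\perp$ with $e_\perp\perp e$ a unit vector. The derivative of $z\mapsto a(\norm z)z$ at $z=sv$ is
\[
a(rs)I+rs\,a'(rs)\,ee^\top .
\]
On $e_\perp$ this acts as multiplication by $a(rs)$. Repeating the derivation of \eqref{eq:longvar} with $b_r$ replaced by $a(rs)$ gives the transverse variational equation. Substituting $\xi=e^{\lambda t}$ and dividing by $\lambda$ yields \eqref{eq:transverseD}. As in the proof of Theorem~\ref{thm:travelunstable}, the hypotheses ($a$ bounded, $sw\in L^1$) give continuous extension to $\lambda=0$, where the divided kernel equals $s$. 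Hence
\[
D_\perp(0)=1-M_0-\kappa F_a(r)=0
\]
by \eqref{eq:speed}. This is the neutral rotation mode.

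\emph{No roots with $\Re\lambda\ge0$, $\lambda\neq0$.} Set $P(\lambda)=\widehat m(\lambda)+\kappa\int_0^\infty w(s)a(rs)\frac{1-e^{-\lambda s}}{\lambda}\dd s$, so that $D_\perp=1-P$. Write
\[
\frac{1-e^{-\lambda s}}{\lambda}=\int_0^s e^{-\lambda u}\dd u .
\]
Since $m\ge0$ and $w(s)a(rs)\ge0$, we get
\[
|P(\lambda)|\le \int_0^\infty m(s)e^{-\Re\lambda\, s}\dd s+\kappa\int_0^\infty w(s)a(rs)\int_0^s e^{-\Re\lambda\, u}\dd u\dd s\le M_0+\kappa F_a(r)=1 .
\]
So a root requires equality throughout. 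There are two cases.

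\emph{Case $\Re\lambda>0$.} Here $e^{-\Re\lambda\,u}<1$ for $u>0$. Because $\kappa F_a(r)>0$, the function $w(s)a(rs)$ is nontrivial, so the second inequality is strict. Hence $|P|<1$ and there is no root.

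\emph{Case $\lambda=i\omega$, $\omega\ne0$.} The moduli now coincide, so equality forces a phase alignment. Equality in $|\int f\,d\mu|\le\int|f|\,d\mu$ for the combined positive measure (from $m(s)\dd s$ and $\kappa w(s)a(rs)\dd u\dd s$ on $0<u<s$) requires $e^{-i\omega u}$ to be a.e.\ constant, equal to $1$, on the support of the measure. However, the $u$-integration runs over the full interval $(0,s)$ for every $s$ in the support of $wa(r\cdot)$, and that support has positive measure with $s>0$. On an interval of positive length $e^{-i\omega u}$ is not constant, so the inequality is strict. Equivalently,
\[
\Re P(i\omega)<\Re P(0)=1 ,
\]
since $\Re\frac{1-e^{-i\omega s}}{i\omega}=\frac{\sin\omega s}{\omega}<s$ for $s>0$. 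This strict bound suffices, and it needs only nontriviality of $wa(r\cdot)$.

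The main obstacle is the imaginary-axis case, since the modulus bound alone is not strict there. The real-part argument above removes it cleanly: $\Re\widehat m(i\omega)\le M_0$, and the second term is strictly below $\kappa F_a(r)$. This gives $\Re D_\perp(i\omega)>0$ for every $\omega\ne0$, which completes the proof.
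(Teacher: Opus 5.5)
Your proof is correct and essentially matches the paper's: the branch balance gives $D_\perp(0)=1-M_0-\kappa F_a(r)=0$. Positivity together with $|\int_0^s e^{-\lambda u}\dd u|<s$ for $s>0$, in both the case $\Re\lambda>0$ and the case of nonzero imaginary $\lambda$, then yields the strict bound $|P(\lambda)|<M_0+\kappa F_a(r)=1$. Your real-part variant on the imaginary axis, $\Re P(i\omega)<1$ via $\sin(\omega s)/\omega<s$, is an equally valid and slightly more explicit way to obtain the same strict inequality.
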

\begin{proof}
At $\lambda=0$, the branch balance yields $D_\perp(0)=1-M_0-\kappa F_a(r)=0$. For $\Re\lambda>0$, positivity and $|\int_0^s e^{-\lambda u}\dd u|<s$ give a strict modulus bound below $M_0+\kappa F_a(r)=1$. The same strict inequality holds for nonzero purely imaginary $\lambda$: the modulus of the integral of the nonconstant unit-modulus function $u\mapsto e^{-\lambda u}$ is strictly smaller than $s$ for every $s>0$. Integration against a nontrivial nonnegative $w$ completes the proof.
\end{proof}

\section{Geometric coexistence with identical local forces}\label{sec:strongsep}
We now reach the main structural result. Imagine observing a system only while its velocity histories remain close to rest. Even with exact access to the nonlinear memory force throughout that local neighborhood, one could not distinguish the two systems constructed below. Yet their global outcomes differ: one \emph{always stops} from every admissible bounded velocity prehistory; the other can either settle to rest or approach a persistent straight motion, and it also possesses a translating branch with a growing longitudinal perturbation.

The theorem makes this comparison under restrictive shared conditions. Both imprints are smooth, bounded, strictly radially decreasing, and coupled to the same positive temporal weights and gain. Only their finite-displacement geometry differs. For the first imprint, a global Lipschitz estimate controls all histories. For the second, a shaped response at nonzero speeds creates translating branches and supplies the derivative bounds needed for attraction. Thus one local force germ is compatible with demonstrably distinct global dynamics in the exact hereditary equation.

\begin{theorem}[Identical local forces: global arrest versus rest--motion coexistence]\label{thm:geostable}
Fix $0<\rho<1$ and $m(s)=\rho e^{-s}$. There exist nonnegative $C_c^\infty$ temporal weights $w$ normalized by $\int_0^\infty sw(s)\dd s=1$, two bounded smooth strictly radially decreasing imprints $\Phi_1,\Phi_2$ on $\R$, and a common gain $0<\kappa<1-\rho$ with the following properties.
\begin{enumerate}
\item The memory forces $\mathfrak F_{\Phi_1}$ and $\mathfrak F_{\Phi_2}$ coincide exactly on an open neighborhood of the zero history in the common velocity-history space $\mathcal V$. All their local Fr\'echet derivatives at rest agree.
\item Every admissible solution of the $\Phi_1$ system with bounded velocity prehistory exists globally, has exponentially decaying velocity, and converges to a history-dependent constant position. In particular, the system has no nonzero uniformly translating solution.
\item For the $\Phi_2$ system, the resting velocity $v\equiv0$ is locally exponentially attracting in fading-memory topology. There are at least two distinct positive uniformly translating speeds $r_-<r_+$: the lower-speed branch has a positive real longitudinal characteristic exponent, while the upper-speed branch is locally exponentially attracting in the scalar velocity dynamics, and $x(t)-r_+t$ converges to a constant along nearby compatible solutions.
\end{enumerate}
The same local nonlinear force germ therefore supports globally arrested dynamics in one system and coexistence of locally attracting rest and motion with an additional spectrally unstable moving branch in the other. The assertion concerns the specified branches and does not impose an exhaustive global attractor classification.
\end{theorem}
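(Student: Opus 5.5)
The construction will use a compactly supported temporal weight $w$. This is what turns the sup-norm germ statement into a velocity-history statement. If $\operatorname{supp}w\subset[0,S]$, then $|\mathsf d_s v|\le S\norm v_\infty$ on the support. Hence whenever $\nabla\Phi_1=\nabla\Phi_2$ on $\{|z|<\delta\}$, the forces $\mathfrak F_{\Phi_1}$ and $\mathfrak F_{\Phi_2}$ coincide on the ball $\norm v_\infty<\delta/S$ in $\mathcal V$. Agreement of all Fr\'echet derivatives at rest then follows, which gives item~1.

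I would take $w$ to be a smooth bump concentrated near a fixed age $s_0$ and normalized so that $M_1=1$. Its $L^1$ mass is then about $1/s_0$. The imprints are built as in Theorem~\ref{thm:germ}: $a_2=a_1+Cb$ with $b\in C_c^\infty((\delta,2\delta))$ and \eqref{eq:construct}. For $\Phi_1$ I would choose $a_1$ positive and decreasing with $\sup|a_1+ua_1'|=L_1$ small. An example is $a_1(u)=\eta(1+u^2)^{-1}$ with $\eta$ small, or $a_1=\eta e^{-u^2/2}$.

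Item~2 follows from Proposition~\ref{prop:exist} and Theorem~\ref{thm:arrest}. Here $L_A\le\sup|a_1|$ in the scalar radial factorization \eqref{eq:radialkernel}, and $w$ is compactly supported, so every exponential moment is finite. Choose $\kappa<1-\rho$ first. Taking $\eta$ small makes $\rho+\kappa\eta M_1<1$, and by continuity $Q_\gamma<1$ for some $\gamma>0$, uniformly in the history. This yields global existence, exponential arrest for every bounded prehistory, and therefore no translations.

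Rest for $\Phi_2$ is locally attracting by the same argument restricted to the ball where the two forces agree. Solutions starting with $\norm{v_-}_\infty$ small remain in the contraction ball, on which the $\Phi_2$ dynamics coincide with the $\Phi_1$ dynamics, and so they arrest. Fading-norm convergence follows by the splitting at the end of Theorem~\ref{thm:twosided}.

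For the moving branches I would use that $w\approx\delta$-mass concentrated at $s_0$, so that
\[
F_{a_2}(r)\approx a_2(rs_0),\qquad B_r\approx\bigl|g_2'(rs_0)\bigr|.
\]
Choose $C$ so large that $\max F_{a_2}>(1-\rho)/\kappa$ while $F_{a_2}(0)=\eta<(1-\rho)/\kappa$. The intermediate value theorem then gives roots $r_-<r_+$ of \eqref{eq:speed}. We need $F'(r_-)>0$ on the rising side, so that Theorem~\ref{thm:travelunstable} applies and yields the positive exponent. On the falling side we need $\rho+\kappa B_{r_+}<1$, so that Theorem~\ref{thm:twosided}(2) gives local exponential attraction and convergence of $x(t)-r_+t$.

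The main obstacle is this last inequality. Near a descending root, $b_{r}(s)=a_2(rs)+rsa_2'(rs)$ contains the large negative term $rs\,Cb'(rs)$. So $|b_r|$ need not be small, and the crude $B_r$ can easily exceed $(1-\rho)/\kappa$. My first attempt is to shape $b$ as a tall plateau with a very gentle outer tail: $b$ rises quickly on its inner side, stays flat, and then decays slowly, for instance logarithmically in $u$ over a long interval before the cutoff. This keeps $|u b'(u)|$ small at the crossing point $u=r_+s_0$ while $b$ itself sits at the level $(1-\rho)/(\kappa C)$.

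Then $|g_2'(r_+s)|\approx a_2(r_+s)\bigl(1-\text{small}\bigr)$, and the total is $\rho+\kappa B_{r_+}\approx \rho+\kappa F_{a_2}(r_+)+\text{small}=1+\text{small}$. This is only borderline, so a sharper bound is needed. The sign of $b_r$ matters: on the tail, $g_2'=a_2+ua_2'$ can be made strictly less than $a_2$ while remaining positive. With $w$ concentrated, this gives $B_{r_+}<F_{a_2}(r_+)$ strictly, and the margin is controlled by $\inf(-ua_2'/a_2)$ on $\operatorname{supp}w\cdot r_+$. The remaining work is to keep the error from the nonzero width of $w$ smaller than that margin, which should follow by shrinking the support of $w$ after fixing $b$.
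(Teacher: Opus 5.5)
Your plan uses the paper's mechanism:
\begin{itemize}
\item a compactly supported weight concentrated near one age with $M_1=1$, so that gradient agreement for $|z|<\delta$ becomes exact force agreement on a sup-norm ball of $\mathcal V$;
\item a Gaussian base imprint, for which $\rho+\kappa\sup|g_1'|M_1<1$ gives global existence and arrest through Proposition~\ref{prop:exist} and Theorem~\ref{thm:arrest};
\item at the descending root, the pointwise inequality $0<g_2'(r_+s)<a_2(r_+s)$ on $\operatorname{supp}w$, which integrates to $B_{r_+}<F_2(r_+)$ and hence $\rho+\kappa B_{r_+}<1$.
\end{itemize}
Your route to attraction of rest for $\Phi_2$ differs from the paper's, which re-centers Theorem~\ref{thm:twosided} at $r=0$. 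You note that the $\Phi_1$ solution never leaves the agreement ball and therefore solves the $\Phi_2$ equation. This is legitimate, provided you add uniqueness for the $\Phi_2$ equation. You can get it inside the contraction ball of Theorem~\ref{thm:twosided} at $r=0$, whose index is at most $\rho+\kappa\sup|g_1'|<1$.

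The difficulty you spend most effort on comes from fixing $\kappa$ before $C$, which can put the descending crossing on a steep flank of $Cb$. The paper reverses the order. It fixes $r_+$ slightly beyond the peak of $a_2$, where $a_2'<0$ but $|ua_2'|<a_2$ because $a_2'$ vanishes at the peak. Only then does it set $\kappa=(1-\rho)/F_2(r_+)$, which is $<1-\rho$ because $F_2(r_+)>1$. With this order:
\begin{itemize}
\item no specially shaped tail is needed;
\item $r_+$ does not move as the support of $w$ shrinks;
\item there is no width error to absorb, since once the pointwise inequality holds on $r_+\operatorname{supp}w$, the strict bound on $B_{r_+}$ is exact.
\end{itemize}
Your intermediate ``$1+$small'' is a sign slip: $|g_2'|\approx a_2(1-\text{small})$ gives $1-$small.

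Two loose ends remain.
\begin{itemize}
\item The intermediate value theorem only yields a root with $F_2'(r_-)\ge0$, whereas Theorem~\ref{thm:travelunstable} needs $F_2'(r_-)>0$. You must arrange $a_2'>0$ on a neighborhood of the rising crossing; with $a_2=a_1+Cb$ and $a_1$ decreasing, this means $Cb'>-a_1'$ there. Then concentrate $w$ so that $F_2'(r)=\int s^2w(s)a_2'(rs)\dd s>0$ on a bracketing interval, as the paper does on $(7/10,9/10)$.
\item The choice $a_1=\eta(1+u^2)^{-1}$ is inadmissible. Since $\int^\infty ua_1(u)\dd u$ diverges, $\Phi_1$ is unbounded. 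The Gaussian works, and the factor $\eta$ is unnecessary because $\sup|g_1'|=1$ and $\kappa<1-\rho$ already give $\rho+\kappa<1$.
\end{itemize}
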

\begin{proof}
Define the standard smooth step function
\begin{equation}\label{eq:smoothstep}
 \vartheta(t)=\begin{cases}
 0,&t\le0,\\
 \displaystyle\frac{e^{-1/t}}{e^{-1/t}+e^{-1/(1-t)}},&0<t<1,\\
 1,&t\ge1,
 \end{cases}
\end{equation}
and let
\begin{equation}\label{eq:chi}
 \chi(u)=\vartheta\!\left(\frac{u-1/4}{3/5-1/4}\right)
 \vartheta\!\left(\frac{2-u}{2-7/5}\right).
\end{equation}
Then $\chi\in C_c^\infty(\R), \ \operatorname{supp}\chi\subset[1/4,2]$, $0\le\chi\le1$, and $\chi=1$ on $[3/5,7/5]$. Define
\begin{align}
 a_1(u)&=e^{-u^2/2},\\
 a_2(u)&=e^{-u^2/2}\bigl(1+u^2\chi(u)\bigr),\label{eq:strongpair}\\
 \Phi_i(x)&=\int_{|x|}^\infty u a_i(u)\dd u.
\end{align}
These are smooth, bounded, strictly decreasing radial imprints with $-\Phi'_i(x)=a_i(|x|)x$ and equal gradients for $|x|<1/4$. For $\eta>0$, introduce
\begin{equation}\label{eq:temporalshape}
\begin{aligned}
\beta(t)&=\begin{cases}
e^{-1/(1-t^2)},&|t|<1,\\
0,&|t|\ge1,
\end{cases}\\
w_\eta(s)&=\frac{\beta((s-1)/\eta)}
{\displaystyle\int_0^\infty u\beta((u-1)/\eta)\dd u}.
\end{aligned}
\end{equation}
Thus $w_\eta\ge0$ is smooth, is supported in $[1-\eta,1+\eta]$, and satisfies $\int_0^\infty sw_\eta(s)\dd s=1$. We choose $\eta>0$ sufficiently small in the following argument.

Since $w_\eta$ has bounded support, any history with $\norm v_\infty<1/[4(1+\eta)]$ satisfies $|\mathsf d_sv|<1/4$ for every $s$ in the support of $w_\eta$. Hence the memory forces are identical throughout this \emph{open velocity-history neighborhood}, proving the first assertion directly in $\mathcal V$.

Let $r_+=6/5$ and set $F_i(r)=\int sw_\eta(s)a_i(rs)\dd s$. Because $a_2(u)=(1+u^2)e^{-u^2/2}$ throughout a neighborhood of $u=6/5$, the elementary inequalities
\begin{align*}
 a_2(6/5)&>1,\qquad a'_2(6/5)<0,\\
 0&<a_2(6/5)+(6/5)a'_2(6/5)\\
  &<a_2(6/5)
\end{align*}
are strict. By continuity, for sufficiently small $\eta$ they hold at $u=(6/5)s$ for every $s$ in the support of $w_\eta$. In particular,
\[
 F_2(r_+)>1,\qquad
 \int sw_\eta(s)|b_{r_+}(s)|\dd s<F_2(r_+).
\]
Fix $\kappa=(1-\rho)/F_2(r_+)<1-\rho$.

For the first imprint, put $g_1(d)=de^{-d^2/2}$. Then
\[
 g'_1(d)=(1-d^2)e^{-d^2/2},\qquad
 \sup_{d\in\R}|g'_1(d)|=1.
\]
Since $M_0=\rho$, $M_1=1$, and $\kappa<1-\rho$, the existence condition $M_0+\kappa M_1<1$ holds. Proposition~\ref{prop:exist} gives global solutions for bounded continuous velocity prehistories, with $C^1$ regularity at the join for compatible initial data. Because $m(s)=\rho e^{-s}$ and $w_\eta$ is compactly supported, the left-hand side of \eqref{eq:Qgamma} converges to $\rho+\kappa<1$ as $\gamma\downarrow0$. For sufficiently small $\gamma>0$, Theorem~\ref{thm:arrest} therefore gives exponential velocity arrest and convergence to a constant position for every admissible solution of the first system.

The two imprints agree near zero, with $a_1(0)=a_2(0)=1$. The local nonlinear Lipschitz index of the $\Phi_2$ velocity operator at $v\equiv0$ is thus $\rho+\kappa<1$. The local contraction and exponentially weighted comparison of Theorem~\ref{thm:twosided}, centered at $r=0$, yield exponential decay of small compatible speed perturbations and convergence of their histories in fading-memory norms. This establishes the required attraction of rest in the second system.

The speed $r_+$ is an exact translating solution by Theorem~\ref{thm:travel}. Its contraction index satisfies
\[
 \rho+\kappa B_{r_+}
 <\rho+\kappa F_2(r_+)=1.
\]
Both $m$ and $w_\eta$ have exponential moments, and the nonlinear force derivative is bounded and continuous. Theorem~\ref{thm:twosided} proves local exponential attraction of this speed.

At least one additional positive-speed solution follows from the strict inequalities
\[
 a_2(7/10)<a_2(6/5)<a_2(9/10),
\]
which hold in the region where $\chi=1$. By concentration of the normalized weights near $s=1$, the same inequalities hold for $F_2$ when $\eta$ is sufficiently small. Continuity gives a solution $r_-\in(7/10,9/10)$ of $F_2(r_-)=F_2(r_+)$. Throughout that interval and on the support of $w_\eta$ one has $rs\in(3/5,1)$, so
\[
 a'_2(rs)=rs\bigl(1-r^2s^2\bigr)e^{-r^2s^2/2}>0.
\]
Therefore $F'_2(r_-)>0$. Theorem~\ref{thm:travelunstable} gives a positive real longitudinal exponent.

Finally, $a_1$ is strictly decreasing and $F_1(0)=1$; hence $F_1(r)<1$ for every $r>0$. Our chosen gain makes the required speed level $(1-\rho)/\kappa=F_2(r_+)>1$, so the $\Phi_1$ system has no positive translating speed. Evenness gives the corresponding negative-speed conclusions. All strict inequalities are preserved for sufficiently small support width $\eta$, completing the proof.
\end{proof}

\begin{remark}[Normalized and subnormalized velocity memory]\label{rem:normalization}
The coexistence mechanism in Theorem~\ref{thm:geostable} uses the subnormalized velocity-memory mass $M_0=\rho<1$. The traveling balance \eqref{eq:speed} is
\[
 1-M_0=\kappa F_a(r),\qquad r>0.
\]
For a strictly repulsive radial imprint ($a>0$) and a nontrivial positive temporal weight, $F_a(r)>0$. Therefore the normalized case $M_0=1$, including $m(s)=\mu e^{-\mu s}$, admits no nonzero uniform translation in this class, as proved in Theorem~\ref{thm:travel}. When $M_0<1$, the positive deficit $1-M_0$ can balance the self-generated propulsive memory term, allowing the nonzero branches constructed above. The two regimes have distinct exact traveling-balance structures.
\end{remark}

Theorem~\ref{thm:geostable} establishes a precise \emph{local-to-global non-identifiability}: equality of the full nonlinear force in a neighborhood of rest does not identify the family of complete moving solutions, even when the temporal kernels, gain, and radial monotonicity constraints are fixed. Its global-arrest conclusion concerns every bounded admissible prehistory of the first system; its attraction conclusions for the second are local and apply to the identified rest and upper translating branches. A positive longitudinal exponent is proved for the lower branch. The theorem does not characterize basin boundaries or exclude further invariant motions.

The construction is visualized in Fig.~\ref{fig:branches} using the explicit kernels \eqref{eq:chi} and \eqref{eq:temporalshape}. Both panels plot exact \emph{constant-speed balance information}; neither panel is an evolution equation for the instantaneous speed. The attraction and spectral conclusions follow independently from Theorems~\ref{thm:arrest}, \ref{thm:travelunstable}, and \ref{thm:twosided}. In particular, an autonomous one-dimensional speed portrait would not represent the hereditary evolution \eqref{eq:abstract_velocity}.

\begin{figure*}[t]
\centering
\includegraphics[width=0.98\textwidth]{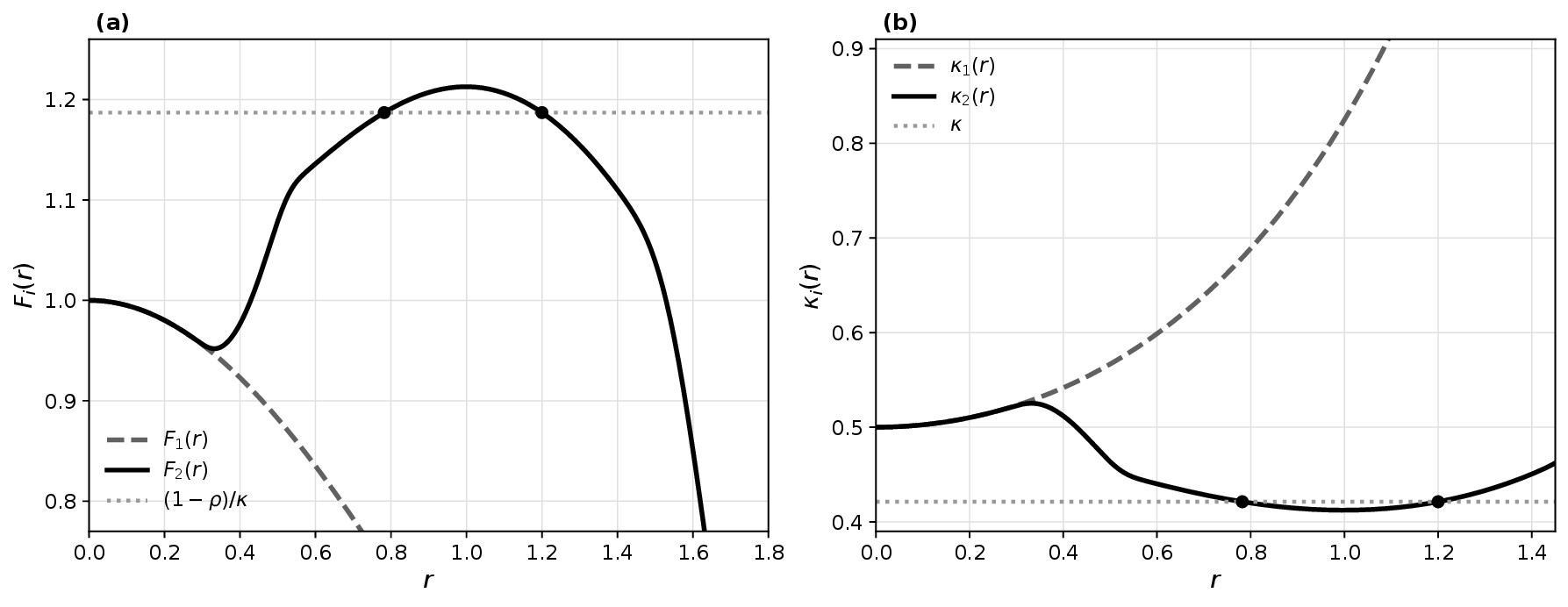}
\caption{Exact geometric traveling-speed branches for the construction in Theorem~\ref{thm:geostable}; here $\rho=1/2$, $\eta=0.06$, $r_+=6/5$, and $\kappa=(1-\rho)/F_2(r_+)\approx0.421243$. (a) The responses $F_i(r)=\int sw_\eta(s)a_i(rs)\,ds$ and their common balance level. (b) Gain--speed curves $\kappa_i(r)=(1-\rho)/F_i(r)$; intersections with the horizontal gain yield translating solutions. For $\Phi_2$, the identified positive speeds are $r_-\approx0.782069$ and $r_+=1.2$. The positive longitudinal characteristic exponent of the
	lower branch and the local exponential attraction of the
	upper branch are proved in Theorems~\ref{thm:travelunstable}
	and~\ref{thm:twosided}, respectively, and cannot be
	inferred from the traveling-speed curves alone.}
\label{fig:branches}
\end{figure*}

An infinitely long memory tail produces a subtler situation. A tiny constant speed still generates large displacements if we look far enough into the past. Consequently, two imprints that agree at small spatial distances need not generate identical force operators on any open \emph{velocity-history} ball. The distant past is strongly suppressed by an exponential temporal weight, however. The next theorem quantifies the result: the two forces may differ for arbitrarily small velocity histories, yet their difference decays faster than every algebraic power of the history amplitude. This provides an exact meaning for the inability of finite-order local tests to detect certain finite-displacement changes.

\begin{theorem}[Exponential flatness of distinct memory forces]\label{thm:flat}
Let $w(s)=e^{-s}$, $a_1(u)=e^{-u^2/2}$, and $a_2(u)=a_1(u)+Cb(u)$ with $C>0$ and $b\in C_c^\infty((\delta,2\delta))$ nonnegative and nonzero. Let $\mathfrak F_i$ be their forces on $\mathcal V$, and put $\varepsilon=\norm v_\infty$. Then for $\varepsilon>0$,
\begin{equation}\label{eq:flat}
 \norm{\mathfrak F_2(v)-\mathfrak F_1(v)}
 \le C\norm b_\infty(\delta+\varepsilon)
 e^{-\delta/\varepsilon}.
\end{equation}
The two force maps are $C^\infty$ near zero with identical Fr\'echet derivatives of every order there. Nevertheless, they differ at some history in every open velocity ball about zero.
\end{theorem}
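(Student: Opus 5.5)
The plan starts from the exact difference of the two forces. Since $-\nabla\Phi_i(z)=a_i(\norm z)z$, we have
\[
 \mathfrak F_2(v)-\mathfrak F_1(v)=C\int_0^\infty e^{-s}\,b(\norm{\mathsf d_sv})\,\mathsf d_sv\dd s .
\]
The integrand vanishes unless $\norm{\mathsf d_sv}>\delta$. Because $\norm{\mathsf d_sv}\le s\varepsilon$, this forces $s>\delta/\varepsilon$. On that range we also have $\norm{b(\norm{\mathsf d_sv})\mathsf d_sv}\le\norm b_\infty\,s\varepsilon$. It would be tempting to use the support bound $\norm{\mathsf d_sv}<2\delta$ instead, but the factor $s\varepsilon$ is what yields the stated constant. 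Integrating, we get
\[
 \norm{\mathfrak F_2(v)-\mathfrak F_1(v)}\le C\norm b_\infty\,\varepsilon\int_{\delta/\varepsilon}^\infty se^{-s}\dd s
 =C\norm b_\infty\,\varepsilon\Bigl(\tfrac{\delta}{\varepsilon}+1\Bigr)e^{-\delta/\varepsilon},
\]
and this is exactly \eqref{eq:flat}.

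For smoothness, I would note that $f_i(z)=a_i(\norm z)z$ is $C^\infty$ with every derivative bounded. For $a_1$ this holds because $z e^{-\norm z^2/2}$ is a Schwartz-type function. The bump term $b(\norm z)z$ is smooth, since $b$ vanishes near $0$, and it has compact support. The $k$-th Fréchet derivative of the Nemytskii map $v\mapsto f_i(\mathsf d_sv)$ is $D^kf_i(\mathsf d_sv)[\mathsf d_s z_1,\dots,\mathsf d_s z_k]$. Its norm is at most $\norm{D^kf_i}_\infty s^k\prod\norm{z_j}_\infty$. Since $\int_0^\infty s^ke^{-s}\dd s=k!<\infty$, the same domination argument as in Proposition~\ref{prop:commonphase} justifies differentiating under the integral to every order. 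Dominated convergence then gives continuity of each derivative, so both force maps are $C^\infty$ on all of $\mathcal V$.

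The step I expect to be the main obstacle is showing that all derivatives at zero agree, because the forces do not coincide on any ball. I would argue directly. At $v=0$ we have $\mathsf d_s0=0$, and $D^kf_1(0)=D^kf_2(0)$ for every $k$, because $b(\norm z)z$ vanishes identically for $\norm z<\delta$. The derivative formula above then gives $D^k\mathfrak F_1(0)=D^k\mathfrak F_2(0)$ for every $k$. The flatness estimate is consistent with this, since $(\delta+\varepsilon)e^{-\delta/\varepsilon}=o(\varepsilon^N)$ for every $N$. The phrase ``identical derivatives there'' should be read at the origin, or at constant histories. A second, optional route is Taylor's theorem, combined with the observation that a homogeneous polynomial bounded by $o(\varepsilon^N)$ must vanish.

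For non-coincidence, I would take constant histories $v\equiv re$ with $r>0$ arbitrarily small. Then $\mathsf d_sv=rse$, and the difference equals $C\,e\,r\int_0^\infty se^{-s}b(rs)\dd s=C\,r\,B(r)\,e$, with $B$ as in \eqref{eq:responsebump}. Since $b\ge0$ is nonzero and continuous, it is positive on some interval $I\subset(\delta,2\delta)$. Then $b(rs)>0$ for all $s\in I/r$, a set of positive measure, so $B(r)>0$. Such histories lie in every open ball about zero, which finishes the proof.
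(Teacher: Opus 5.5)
Your proof is correct and follows the paper's argument essentially step for step: the same tail estimate $C\norm b_\infty\varepsilon\int_{\delta/\varepsilon}^\infty se^{-s}\dd s$, the same $s^k$-domination for smoothness, and the same constant-speed histories $v\equiv re$ for non-coincidence. The only difference is that you get equality of all derivatives at $v=0$ directly from $D^kf_1(0)=D^kf_2(0)$ inside the differentiated integral, while the paper infers it from the $o(\varepsilon^N)$ flatness bound; both routes are valid, and yours avoids the Taylor/polarization step. One small caution: your alternative reading ``at constant histories'' is only right for constant \emph{position} histories (i.e.\ the zero velocity history), since at nonzero constant velocity histories the two maps already differ, as your own last paragraph shows.
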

\begin{proof}
Set $d_s=\mathsf d_sv$. Then $\norm{d_s}\le\varepsilon s$. Since $b$ is supported at radii at least $\delta$, a nonzero difference requires $s\ge\delta/\varepsilon$. Thus
\begin{align*}
 \norm{\mathfrak F_2(v)-\mathfrak F_1(v)}
 &\le C\norm b_\infty\varepsilon
 \int_{\delta/\varepsilon}^\infty se^{-s}\dd s\\
 &=C\norm b_\infty(\delta+\varepsilon)e^{-\delta/\varepsilon}.
\end{align*}
For each $k\ge1$, the $k$th history derivative of the smooth compactly supported function $d\mapsto b(\norm d)d$ is bounded; composition with $v\mapsto\mathsf d_sv$ contributes at most $s^k$. As $\int_0^\infty s^ke^{-s}\dd s<\infty$, repeated differentiation under the integral gives a $C^\infty$ force map. The difference estimate is $o(\varepsilon^N)$ for every integer $N$, forcing all its derivatives at zero to vanish.

On the other hand, choose any constant velocity history $v(\theta)\equiv re$ with $r>0$. Then the force difference is
\[
 Cr e\int_0^\infty se^{-s}b(rs)\dd s,
\]
which is nonzero for every $r>0$, since $b$ is nontrivial and the integrand is positive on a nonempty interval. Such histories occur in every ball about zero. Hence no open velocity ball supports exact equality of these two infinitely supported-memory force operators.
\end{proof}

\begin{example}[Reproducible smooth bump and exponentially flat force]\label{ex:bump}
For the illustration in Fig.~\ref{fig:flatness}, take $\delta=1$, $C=6$, and
\begin{equation}\label{eq:bump}
 b(u)=\begin{cases}
 \displaystyle\exp\!\left(-\frac{1}{(u-1)(2-u)}\right),&1<u<2,\\
 0,&\text{otherwise}.
 \end{cases}
\end{equation}
Then $\|b\|_\infty=e^{-4}$. For constant speed histories $v(\theta)\equiv\varepsilon>0$ the exact force difference is
\begin{equation}\label{eq:flatquad}
 |\mathfrak F_2(v)-\mathfrak F_1(v)|
 =\frac{C}{\varepsilon}\int_1^2 u e^{-u/\varepsilon}b(u)\dd u.
\end{equation}
The upper bound of Theorem~\ref{thm:flat} is $Ce^{-4}(1+\varepsilon)e^{-1/\varepsilon}$. Theorem~\ref{thm:germ} guarantees differing translating branches for sufficiently large values of $C$ and corresponding subcritical gains; the flatness estimate itself holds for every $C>0$.
\end{example}

\begin{figure*}[t]
\centering
\includegraphics[width=0.98\textwidth]{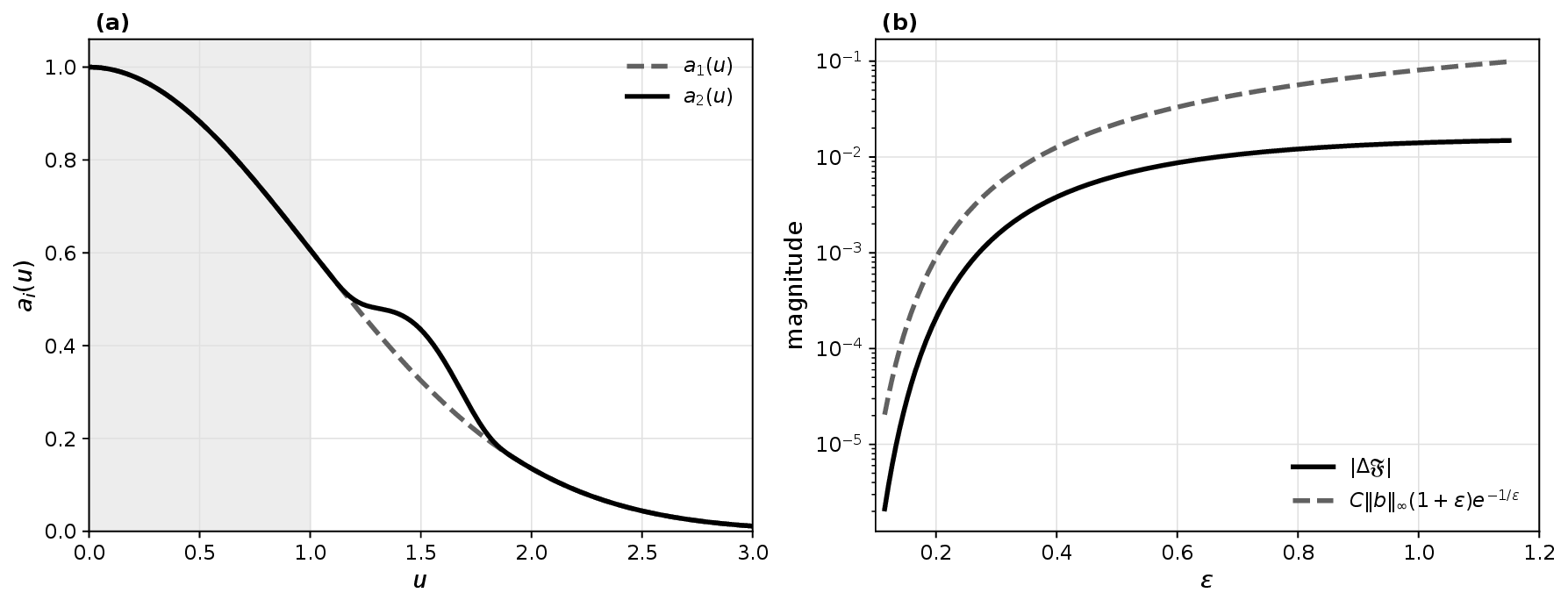}
\caption{Smooth geometric imprint perturbation and flatness for the explicit bump \eqref{eq:bump}. (a) The two radial force factors agree for $0\le u\le1$, while their difference is supported on $1<u<2$. (b) The exact constant-speed force difference \eqref{eq:flatquad} for $w(s)=e^{-s}$ and its exponential upper bound. Every curve is computed directly by one-dimensional quadrature; no time-domain simulation is used.}
\label{fig:flatness}
\end{figure*}

\begin{remark}[Smooth versus analytic locality]\label{rem:analytic}
Theorem~\ref{thm:flat} uses a $C^\infty$ bump perturbation. In the real-analytic class, equality of the radial force factor on a nontrivial interval forces equality on the connected domain by analytic continuation. The flat non-identifiability mechanism is therefore specific to the smooth non-analytic setting.
\end{remark}

\section{How classical control and delay theory emerge}
The same equations admit several familiar descriptions after an appropriate specialization. Their linearization is a Volterra convolution; a low-frequency expansion resembles derivative feedback; exponential temporal laws permit auxiliary filter states. These are useful analytical viewpoints on the same dynamics. They capture local behavior or chosen realizations, while the nonlinear traveling response requires the full imprint beyond the equilibrium displacement. We make the reductions explicit to identify which geometric information each one retains.

\subsection{Distributed-delay and Volterra representation}
With fixed $w$ and $H=-\nabla^2\Phi(0)$, linearization at a constant history gives
\[
 \dot y=m*\dot y+\kappa H\int_0^\infty w(s)
 [y(t)-y(t-s)]\dd s.
\]
The tail-kernel identity
\begin{equation}\label{eq:tail}
 \int_0^\infty w(s)[y(t)-y(t-s)]\dd s
 =\int_0^\infty q(r)\dot y(t-r)\dd r
\end{equation}
recovers a conventional linear Volterra convolution. Its characteristic equation is \eqref{eq:char}; the positive-kernel threshold in Proposition~\ref{prop:spec} belongs to established Volterra theory \cite{Gripenberg1990,Naito2007,Ngoc2008}. It is the \emph{tangent limit} of our model, not the source of the geometric separation theorem.

In Laplace notation, the linear memory difference has multiplier
\begin{equation}\label{eq:transfer}
 W_0-\widehat w(p)
 =p\widehat q(p),\qquad
 W_0=\int_0^\infty w(s)\dd s.
\end{equation}
If $M_1<\infty$, then $W_0-\widehat w(p)=M_1p+o(p)$ as $p\to0$ in the transform domain. Hence the linearized feedback resembles filtered derivative control at low frequencies. For $w(s)=\alpha e^{-\alpha s}$ it becomes $p/(p+\alpha)$; for the two-stage field kernel $w(s)=A\alpha se^{-\alpha s}$ it is
\begin{equation}\label{eq:double}
 W_0-\widehat w(p)=\frac{Ap(p+2\alpha)}{\alpha(p+\alpha)^2}.
\end{equation}
These transfer functions cannot distinguish force-germ equivalent imprints in Theorem~\ref{thm:germ}.

\subsection{Fading-memory phase spaces and state dependence}
In infinite-delay functional differential equations, the full history is itself the state \cite{HaleKato1978,Kolmanovskii1999}. The fading-memory tradition \cite{ColemanMizel1966,ColemanMizel1968} formalizes how increasingly remote history may have a decreasing influence on the present state. Our fixed-$w$ force is a nonlinear functional of that history, and the adaptive law \eqref{eq:adaptive} is a state-dependent functional weighting. This is related to, but not identical with, state-dependent \emph{evaluation delays} of the form $x(t-\tau(x_t))$ studied in \cite{Hartung2006}. The distinction matters: changing the temporal weight is not necessarily a change in the time argument itself. Well-posedness and differentiability must be proved in the corresponding phase space, and the absence of a derivative term in Proposition~\ref{prop:blind} is a feature of the stationary imprint-gradient factor, not a universal fact about state-dependent delays.

\subsection{Markovian realization for exponential memories}
For exponential temporal kernels a part of the apparently non-Markovian structure can be realized by additional internal states. Take $m(s)=\rho\mu e^{-\mu s}$ and $w(s)=A\alpha se^{-\alpha s}$, and define for every spatial point $z$
\begin{align}
 U(t)&=\rho\mu\int_0^\infty e^{-\mu s}\dot x(t-s)\dd s,\notag\\
 h(z,t)&=A\alpha\int_0^\infty e^{-\alpha s}\Phi(z-x(t-s))\dd s,\notag\\
 S(z,t)&=A\alpha\int_0^\infty se^{-\alpha s}\Phi(z-x(t-s))\dd s.
\end{align}
Under the usual differentiation and decay assumptions,
\begin{align}
 \dot U&=-\mu U+\rho\mu\dot x,\\[-1pt]
 \partial_t h&=-\alpha h+A\alpha\Phi(z-x(t)),\\[-1pt]
 \partial_t S&=-\alpha S+h,\\[-1pt]
 \dot x&=U-\kappa\nabla S(x(t),t).
\end{align}
This augmented realization has finite-order temporal filters and field states indexed by $z\in\R^n$. The state-space geometry persists in the coupling $\nabla S(x(t),t)$.

\subsection{Lyapunov and small-gain viewpoints}
Classical Lyapunov--Krasovskii analysis aims to establish decay through a suitable functional of the complete history \cite{Kolmanovskii1999}. Theorem~\ref{thm:arrest} instead uses a direct weighted comparison on the exact geometric velocity factorization. These complementary estimates have different strengths: the global factor $L_A$ gives a uniform certificate, while trajectory-specific estimates retain cancellations along prescribed geometric histories. Theorem~\ref{thm:germ} gives a complementary negative result: no local Lyapunov or spectral argument restricted to a neighborhood where the two forces coincide can by itself determine the presence or absence of distant translating branches.

A concrete control interpretation is obtained by applying a geometric memory input to $\dot x=Ax+Bu$. For example,
\[
 u=-Kx+\gamma B^\top\nabla S_x(x,t)
\]
produces the closed loop $\dot x=(A-BK)x+\gamma BB^\top\nabla S_x(x,t)$. Its equilibrium variation recovers a distributed-derivative compensator, while the full nonlinear law depends on the geometry of visited states. Stabilization of a specified target then requires analysis beyond the translation-invariant pure-memory model \eqref{eq:main}. The resulting class connects geometric memory to state-feedback design and to distributed-derivative compensation near a chosen equilibrium.

\section{Geometric classification and structural consequences}
The results can be read as a hierarchy of increasingly informative observations. An equilibrium experiment identifies a local response. A constant-speed experiment probes a weighted range of spatial displacements. A perturbation of that moving experiment reveals derivatives of the force at those nonzero displacements. The constructive separation theorem shows why these additional levels are mathematically consequential: a complete local equilibrium description can coexist with globally different velocity-arrest properties and moving-orbit families.

The equilibrium linearization retains only the local Hessian of the spatial imprint and the temporal transform. The exact geometric response $F_a(r)$ samples the complete imprint along nonzero translating histories. Theorem~\ref{thm:geostable} exploits this structural distinction inside a single hereditary operator class: exact local nonlinear force identity is compatible with global velocity arrest for one imprint and coexistence of attracting rest and motion for another. Its defining structural content is the \emph{simultaneous} preservation of the entire local force germ, positive temporal weighting, monotone radial imprint geometry, and a common feedback gain, together with a proven difference in nonlinear asymptotic motion. The contraction estimates and positive-exponent test draw on Volterra analysis; the geometric construction connects these tools in an operator-level local-to-global non-identifiability result. The positive longitudinal exponent and nonlinear attraction estimate concern the full infinite-history equation, independently of any scalar speed-drift approximation. The construction applies within the specified smooth, non-analytic imprint class and compactly supported temporal weights; other kernel classes are addressed by the general spectral and arrest statements.

The preceding analysis yields three complementary layers of stability information. The equilibrium tangent equation determines the local spectral boundary through $a(0)$ and the first moment of temporal memory. The exact traveling response $F_a(r)$ selects complete moving solutions. Its derivative $F'_a(r)$ and its longitudinal force factor $b_r$ then control growth and attraction of these moving branches. These three layers sample progressively richer information about the spatial imprint.

Theorem~\ref{thm:geostable} combines them into a concrete dynamical separation. At a common subcritical gain, two systems have identical complete nonlinear forces on an open neighborhood of rest in the \emph{same} velocity-history phase space. The first has globally arrested velocities for every admissible bounded prehistory. In the second, locally attracting rest coexists with a spectrally unstable lower-speed branch and a locally exponentially attracting upper-speed branch. Theorem~\ref{thm:flat} describes the infinite-memory counterpart: a difference exponentially small with respect to the velocity-history amplitude, with all derivatives at equilibrium equal, can still change the moving-orbit structure.

This formulation also distinguishes geometrically induced speed stability from symmetry. In one dimension the upper moving branch can attract in velocity and therefore approach a straight traveling orbit up to an asymptotic position offset. In dimension $n\ge2$, the continuum of velocity directions supplies neutral rotational modes, and the correct notion of transverse orbital stability is formulated on a Euclidean-symmetry quotient. Proposition~\ref{prop:transverse} establishes the corresponding transverse spectral exclusion for positive kernels.

The fixed-weight radial theory serves as a foundation for three extensions: anisotropic imprints with direction-dependent traveling response surfaces; adaptive weights in which speed branches solve genuinely nonlinear history-dependent balance laws; and geometric invariant-set stability for rotating or recurrent motions. In each case the temporal kernel, geometric derivative, and phase-space topology are explicit structural inputs.

\section{Conclusions}
A system with self-generated state-space memory selects its past in two ways: by the \emph{age} of each imprint and by its \emph{spatial relation} to the current state. Their coupling produces a nonlinear hereditary force whose effective velocity kernel changes with the trajectory. The exact geometry-to-velocity factorization makes this coupling explicit and yields global arrest estimates, traveling-speed balance equations, and moving-branch stability tests in a shared analytical setting. Linear Volterra equations and distributed-derivative control laws appear as tangent or specialized descriptions of this operator.

The principal result is a constructive form of \emph{geometric non-identifiability}. Two smooth, radially decreasing memory imprints, equipped with identical positive temporal weights and one common feedback gain, exert exactly the same nonlinear force throughout an open neighborhood of rest in a common velocity-history phase space. Yet the first produces exponential velocity arrest for every bounded admissible prehistory, while the second admits coexistence of locally attracting rest and locally attracting uniform translation, together with a moving branch possessing a positive longitudinal characteristic exponent. The distinction is established for the original hereditary equations. It is not inferred from a surrogate autonomous speed equation or from equilibrium spectral data.

The exponential-flatness result further quantifies the loss of information in equilibrium expansions: for exponentially decaying infinite memory, two forces can differ in every velocity neighborhood of rest while agreeing to all finite orders at the zero history. These results locate the essential geometric data in the full spatial imprint and its response along nonzero trajectory increments. The normalized velocity-memory case $M_0=1$ exhibits an exact obstruction to nonzero uniform translations under strictly repulsive radial imprints; subnormalized memory $M_0<1$ permits the force balance behind the constructed coexistence phenomenon.

The ensuing mathematical program concerns the global organization of basins and invariant sets, anisotropic imprint geometry, and history-adaptive temporal weights. The present theorems provide exact conditions for the identified arrest and moving branches, together with an explicit demonstration of the global dynamics that local memory-force information cannot determine.

\section*{Declarations}
\noindent\textbf{Data availability:} The figures are generated from the stated one-dimensional quadratures.\\
\textbf{Conflict of interest:} The author declares no conflict of interest.

\end{document}